\definecolor{emerald}{rgb}{0.31, 0.78, 0.47}
\definecolor{bleudefrance}{rgb}{0.19, 0.55, 0.91}
\newcommand{\p}{\mathbb{P}}
\newcommand{\e}{\mathbb{E}}
\newcommand{\eps}{\varepsilon}
\newcommand{\bra}[1]{\left(#1\right)}
\newcommand{\sqbra}[1]{\left[#1\right]}
\newcommand{\cubra}[1]{\left\{#1\right\}}
\newcommand{\den}[1]{\left\lVert#1\right\rVert}
\newcommand{\seq}{_{n \geq 1}}
\newcommand{\abs}[1]{\left\lvert#1\right\rvert}
\crefname{thm}{Theorem}{Theorems}
\crefname{lem}{Lemma}{Lemmas}
\crefname{clm}{Claim}{Claims}
\crefname{rk}{Remark}{Remarks}
\crefname{prop}{Proposition}{Propositions}
\crefname{defn}{Definition}{Definitions}
\crefname{cor}{Corollary}{Corollaries}
\crefname{conj}{Conjecture}{Conjectures}
\crefname{question}{Question}{Questions}
\crefname{section}{Section}{Sections}
\theoremstyle{plain}
\newtheorem{thm}{Theorem}
\newtheorem*{thm*}{Theorem}
\newtheorem{lem}[thm]{Lemma}
\newtheorem*{lem*}{Lemma}
\newtheorem*{clm*}{Claim}
\newtheorem{cor}[thm]{Corollary}
\newtheorem*{cor*}{Corollary}
\newtheorem{prop}[thm]{Proposition}
\newtheorem*{prop*}{Proposition}
\newtheorem*{conj*}{Conjecture}
\theoremstyle{definition}
\newtheorem{defn}[thm]{Definition}
\newtheorem{defn*}{Definition}
\theoremstyle{remark}
\newtheorem*{rk*}{Remark}
\newcommand{\pcond}{\bm\hat{\mathbb{P}}}
\newcommand{\econd}{\bm\hat{\mathbb{E}}}
\newcommand{\tfail}{\tau_{\text{fail}}}
\newcommand{\tmoat}{\tau_{\text{moat}}}
\newcommand{\1}{\mathbf{1}}
\crefname{ineq}{inequality}{inequalities}
\crefname{alt}{}{}
\title{Existence of a percolation threshold on finite transitive graphs}
\author{Philip Easo \\ \normalsize{California Institute of Technology}  \\ \normalsize{\href{peaso@caltech.edu}{peaso@caltech.edu}}}
\date{\vspace{-2em}}
\begin{document}
\maketitle

\begin{abstract}
Let $(G_n)$ be a sequence of finite connected vertex-transitive graphs with volume tending to infinity. We say that a sequence of parameters $(p_n)$ is a \emph{percolation threshold} if for every $\eps > 0$, the proportion $\den{K_1}$ of vertices contained in the largest cluster under bond percolation $\p_p^G$ satisfies both
\[ \begin{split}{}
	\lim_{n \to \infty} \p_{(1+\eps)p_n}^{G_n} \bra{ \den{K_1} \geq \alpha } &= 1 \qquad \text{for some $\alpha > 0$, and}\\
	\lim_{n \to \infty} \p_{(1-\eps)p_n}^{G_n} \bra{ \den{K_1} \geq \alpha } &= 0 \qquad \text{for all $\alpha > 0$}.
\end{split} \]
We prove that $(G_n)$ has a percolation threshold if and only if $(G_n)$ does not contain a particular infinite collection of pathological subsequences of dense graphs. Our argument uses an adaptation of Vanneuville's new proof of the sharpness of the phase transition for infinite graphs via couplings \cite{https://doi.org/10.48550/arxiv.2201.08223} together with our recent work with Hutchcroft on the uniqueness of the giant cluster \cite{https://doi.org/10.48550/arxiv.2112.12778}.
\end{abstract}

\section{Introduction}
Given a graph $G$, build a random spanning subgraph $\omega$ by independently including each edge with a fixed probability $p$. This model $\p_p^G$ is called \emph{(Bernoulli bond) percolation}. In their pioneering work on random graphs, Erd\H{o}s and R\'{e}nyi \cite{MR125031} proved that when $G$ is the complete graph on $n$ vertices, percolation has a phase transition: as we increase $p$ from $\frac{1-\eps}{n}$ to $\frac{1+\eps}{n}$ for any fixed $\eps > 0$, a giant cluster suddenly emerges containing a positive proportion of the total vertices. Since then, there has been much interest in establishing this phenomenon for more general classes of finite graphs. However, as remarked in \cite{MR2599196}, progress has been slow. In this paper, we solve this problem for arbitrary finite graphs that are \emph{(vertex-)transitive}, meaning that for any two vertices $u$ and $v$, there is a graph automorphism mapping $u$ to $v$. This includes all Cayley graphs, and in particular, the complete graphs, hypercubes, and tori.

Our setting of percolation on finite transitive graphs places us at the intersection of two well-established fields. Loosely speaking, one of these began in combinatorics with the work of Erd\H{o}s and R\'{e}nyi \cite{MR120167,MR125031}, whereas the other began in mathematical physics with the work of Broadbent and Hammersley \cite{MR91567}. In the former, a subgraph of a finite graph is said to \emph{percolate} if its largest cluster contains a positive proportion of the total vertices, whereas in the latter, a subgraph of an infinite transitive graph is said to \emph{percolate} if its largest cluster is infinite. This leads to two different definitions of what it means to \emph{have a percolation phase transition}. Let us start by making these precise.

Here are the graph-theoretic conventions we will be using throughout: The \emph{volume} of a graph $G = (V,E)$ is simply the number of vertices $\abs{V}$. We label the clusters (i.e.\! connected components) of a spanning subgraph of $G$ in decreasing order of volume by $K_1,K_2,\dots$. In a slight abuse of notation, we also write $K_v$ for the cluster containing a given vertex $v$. The \emph{density} of a cluster $K$ is $\den{K} := \frac{\abs{K}}{\abs{V}}$, the proportion of vertices contained in $K$.

Now let $(G_n)$ be a sequence of finite graphs with volume tending to infinity. Following Bollob\'{a}s, Borgs, Chayes, and Riordan \cite{MR2599196}, we say that $(G_n)$ has a percolation phase transition if there is a sequence of parameters $(p_n)$ such that for every $\eps > 0$, both\footnote{We will use the convention that $\p_p^G := \p_1^G$ if $p > 1$ and $\p_p^G := \p_0 ^G$ if $p < 0$.}
\[ \begin{split}{}
	\lim_{n \to \infty} \p_{(1+\eps)p_n}^{G_n} \bra{ \den{K_1} \geq \alpha } &= 1 \qquad \text{for some $\alpha > 0$, and}\\
	\lim_{n \to \infty} \p_{(1-\eps)p_n}^{G_n} \bra{ \den{K_1} \geq \alpha } &= 0 \qquad \text{for all $\alpha > 0$}.
\end{split} \]
This is the subject of our paper: we characterise the existence of a percolation phase transition for finite transitive graphs. Let us mention that the question of a percolation phase transition on general finite graphs is attributed by the above authors of \cite{MR2599196} to Bollob\'{a}s, Kohakayawa, and Łuksak \cite{MR1139488}.

On the other hand, when $G$ is an infinite (locally finite) transitive graph, we define the critical parameter
\[
	p_c := \sup \cubra{p : \p_p^G(\text{there exists an infinite cluster}) = 0 }.
\]
By Kolmogorov's zero-one law, the probability of an infinite cluster under $\p_p^G$ is zero when $p < p_c$ and one when $p > p_c$. So in a trivial sense, $G$ always has a percolation phase transition. The real question is whether $p_c < 1$. (The fact that $p_c > 0$ is obvious by a branching argument.) So in this context, we often say that $G$ has a percolation phase transition to mean that $p_c < 1$. Hutchcroft and Tointon \cite{https://doi.org/10.48550/arxiv.2104.05607} dealt with an analogue of this question for finite transitive graphs with bounded vertex degrees, i.e.\! the question of whether for a given sequence $(G_n)$ of such graphs, there exists $\delta > 0$ such that $\p_{1-\delta}^{G_n}(\den{K_1} \geq \delta) \geq \delta$ for all $n$. This is not the subject of our paper. To avoid any possible confusion, when a sequence of finite graphs $(G_n)$ with volume tending to infinity has a percolation phase transition in the above sense of \cite{MR2599196}, we will instead say that it \emph{has a percolation threshold}, referring to the threshold sequence of parameters $(p_n)$ in the definition.

The main result of this paper is \cref{thm:main} below, which characterises the existence of a percolation threshold on a sequence of finite transitive graphs in terms of the presence of an infinite collection of \emph{molecular subsequences}. We discovered molecular sequences with Hutchcroft in \cite{https://doi.org/10.48550/arxiv.2112.12778} as the only obstacles to the supercritical giant cluster being unique. Interestingly, unlike the usual story for percolation on a new family of graphs (as told in the introduction of \cite{MR2599196}, for example), uniqueness of the supercritical giant cluster came \emph{first}, before the existence of a percolation threshold, and the former is key to our proof of the latter. See \cref{subsec:defining:molecular_seq} for more background. Here we will just recall the definition of a molecular sequence before stating \cref{thm:main}.

\begin{defn} \label{def:molecules}
	Given an integer $m \geq 2$, we say that $(G_n)$ is \emph{$m$-molecular} if it is dense, meaning that ${\liminf_{n \to \infty} \frac{ \abs{E(G_n)}}{\abs{V(G_n)}^2} > 0}$, and there is a constant $C < \infty$ such that for every $n$, there is a set of edges $F_n \subseteq E(G_n)$ satisfying the following conditions:
	\begin{enumerate}[nolistsep]
		\item $G_n \backslash F_n$ has $m$ connected components;
		\item $F_n$ is invariant under the action of $\operatorname{Aut} G_n$;
		\item $\abs{F_n} \leq C \abs{V(G_n)}$.
	\end{enumerate}
	For example, the sequence of Cartesian products of complete graphs $(K_n \square K_m)\seq$ is $m$-molecular. We say that $(G_n)$ is \emph{molecular} if it is $m$-molecular for some $m \geq 2$.
\end{defn}

\begin{thm} \label{thm:main}
A sequence of finite connected transitive graphs with volume tending to infinity has a percolation threshold if and only if it contains an $m$-molecular subsequence for at most finitely many integers $m$.
\end{thm}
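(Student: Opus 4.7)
The plan is to establish the two directions of the characterisation separately; the ``only if'' direction is the warm-up.

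\emph{Only if.} Suppose $(G_n)$ contains an $m$-molecular subsequence for each $m$ in some infinite set $\mathcal{M}$, and let $(p_n)$ be any candidate threshold. Given $\alpha > 0$, I would choose $m \in \mathcal{M}$ with $1/m < \alpha$ and pass to the corresponding $m$-molecular subsequence $(G_{n_k})$ with invariant edge sets $F_{n_k}$. Percolation on $G_{n_k}\setminus F_{n_k}$ then splits into independent percolation on the $m$ molecules, each of volume $\abs{V(G_{n_k})}/m$. The key claim is that at parameter $(1+\eps)p_{n_k}$, with probability bounded below as $k \to \infty$, the $m$ molecule-level giants are \emph{not} all joined through $F_{n_k}$ into a single cluster of $G_{n_k}$. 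On this no-merge event $\den{K_1} \leq 1/m < \alpha$, so $\p^{G_{n_k}}_{(1+\eps)p_{n_k}}(\den{K_1} \geq \alpha) \not\to 1$, contradicting that $(p_n)$ is a threshold. The no-merge bound itself should follow by viewing the $m$ molecules as the nodes of a contraction multigraph whose edges encode open $F_{n_k}$-connections between molecule-giants: the automorphism-invariance and $O(\abs{V(G_n)})$-sparsity of $F_n$ force this contraction structure to behave like a finite Bernoulli random graph on $m$ vertices with edge parameter strictly less than $1$, which has an isolated vertex (and hence is disconnected) with strictly positive probability.

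\emph{If.} Suppose $(G_n)$ has an $m$-molecular subsequence for only finitely many $m$, and define
\[
	p_n := \inf\cubra{p : \p_p^{G_n}\bra{\den{K_1} \geq 1/2} \geq 1/2}.
\]
The threshold condition splits into a supercritical and a subcritical estimate. For the supercritical estimate, I would invoke the uniqueness-of-giant theorem from the previous work with Hutchcroft \cite{https://doi.org/10.48550/arxiv.2112.12778}: in the absence of molecular subsequences, whenever $\p^{G_n}_p(\den{K_1} \geq \alpha_0)$ is bounded below by a positive constant, the largest cluster is essentially unique, with density bounded below by a positive constant depending only on that lower bound. A sprinkling argument between $(1+\eps/2)p_n$ and $(1+\eps)p_n$ should then lift the definitional bound $\p_{(1+\eps)p_n}(\den{K_1} \geq 1/2) \geq 1/2$ to $\p_{(1+\eps)p_n}(\den{K_1} \geq \alpha) \to 1$ for some $\alpha > 0$. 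For the subcritical estimate, I plan to adapt Vanneuville's coupling proof of sharpness \cite{https://doi.org/10.48550/arxiv.2201.08223} to the finite transitive setting: coupling configurations at $p$ and $p + \mathrm dp$ and running the same moment calculation on $G_n$ should yield a differential inequality for an appropriate cluster-size functional, whose integration produces exponential decay of the two-point function $\p^{G_n}_p(x \leftrightarrow y)$ below $(1-\eps)p_n$. Summing this decay over $y$ using transitivity bounds the mean cluster size by $o(\abs{V(G_n)})$, whence $\den{K_1} = o_{\p}(1)$ by Markov.

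The main obstacle will be the finite-volume adaptation of Vanneuville's coupling. On infinite graphs Vanneuville crucially exploits the dichotomy that clusters are either finite or infinite; on $G_n$ every cluster is automatically bounded by $\abs{V(G_n)}$, so this dichotomy is absent and must be replaced by a quantitative analog. Heuristically, molecular subsequences are exactly the obstructions that create stable plateau cluster sizes of order $\abs{V(G_n)}/m$ which would spoil the required differential inequality, so their absence ought to restore it. Pinning down the right cluster-size functional on $G_n$ for which Vanneuville's inequality both holds and integrates usefully, and locating the precise step in the argument where the finiteness of the set of molecular $m$ is invoked, is the technical heart of the remaining work.
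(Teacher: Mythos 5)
Your ``if'' direction breaks at its very first step: the candidate $p_n:=\inf\cubra{p:\p_p^{G_n}\bra{\den{K_1}\ge 1/2}\ge 1/2}$ is pinned to the \emph{constant} density $\tfrac12$, and such a candidate is in general not a percolation threshold. Already for the complete graphs $K_n$ the density-$\tfrac12$ point sits at $c/n$ with $c=2\log 2>1$, so for small $\eps$ the parameter $(1-\eps)p_n$ is still $c'/n$ with $c'>1$ and there is a giant of positive density with probability tending to one: the subcritical half of the definition fails for your $p_n$, and the exponential decay of the two-point function you hope to extract below $(1-\eps)p_n$ is simply false there (the susceptibility is of order $n$). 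This is exactly the trap the paper's construction is designed to avoid: it takes $p_n=p_c^n(\delta_n)$, the threshold for a density $\delta_n\to 0$ slowly, and it proves no subcritical decay estimate at all; the subcritical half then follows from the sharp-density property of \cite{https://doi.org/10.48550/arxiv.2112.12778} (constant-density events have sharp thresholds) combined with a \emph{supercritical} mean-field lower bound. Vanneuville's coupling is used for that supercritical bound, not for subcritical decay: \cref{lem:coupling} shows that conditioning on $\cubra{\den{K_o}<\alpha}$ approximately stochastically dominates percolation at a reduced parameter, yielding $\liminf_n \e_{(1+\eps)p_n}\den{K_1}\ge \eps/(1+\eps)$ when there is no molecular subsequence. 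Note also that no universal lower bound on the supercritical giant density holds over all finite transitive graphs (consider $K_n\square C_m$), so the constant you want cannot come from the uniqueness theorem alone, as your sketch suggests; and your hypothesis still allows $m$-molecular subsequences for $2\le m\le M$, on which the no-molecular-subsequence results you invoke do not apply --- the paper handles these by splitting the graphs into their atoms and comparing thresholds of $G_n$ and of an atom via the dense-graph result of Bollob\'as--Borgs--Chayes--Riordan (\cref{thm:dense}).

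The ``only if'' sketch also has gaps. From ``the $m$ molecule-level giants are not all joined into a single cluster'' you cannot conclude $\den{K_1}\le 1/m$: the largest cluster may still contain $m-1$ of the molecule giants. You would need that \emph{no two} molecule giants merge, and the probability of that event is not bounded below for an arbitrary candidate threshold $(p_n)$: nothing forces $(1+\eps)p_n$ to sit near the molecules' own critical scale --- it could, say, be at the connectivity threshold, where the graph is connected with high probability and your contraction graph has edge parameter $1-o(1)$, not ``strictly less than $1$''. Ruling this out requires playing the subcritical condition off against the supercritical one and essentially amounts to re-proving the characterisation of the supercritical existence property. The paper does not redo that work: it observes that having a percolation threshold implies the supercritical existence property and quotes \cref{cor:sep} from \cite{https://doi.org/10.48550/arxiv.2112.12778}, which says this property fails whenever there are $m$-molecular subsequences for infinitely many $m$.
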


The condition that a sequence $(G_n)$ contains $m$-molecular subsequences for infinitely many integers $m$ is extremely stringent. For example, we can rule it out if $(G_n)$ is either sparse or dense, i.e.\! the edge density of $G_n$ either tends to zero or remains bounded away from zero. Indeed, it is clear that a sparse sequence cannot contain any molecular subsequences, but also notice that since every $m$-molecular sequence $(G_n)$ satisfies ${\limsup_{n \to \infty} \frac{ \abs{E(G_n)}}{\abs{V(G_n)}^2} \leq \frac{1}{m}}$, a dense sequence can contain an $m$-molecular subsequence for at most finitely many integers $m$. In particular, our result implies the existence of a percolation threshold for the complete graphs, hypercubes, and tori. Our result is new even under the additional hypothesis that the graphs have uniformly bounded vertex degrees\footnote{One could imagine a family of sequences of such graphs that each has a percolation threshold but such that the $(1\!+\!\eps)$-supercritical giant cluster density is not bounded away from zero over the entire family. Then by diagonalising, we could construct a sequence without a percolation threshold.}, which is particularly relevant to percolation on infinite graphs.

Most previous work on percolation on finite graphs treated specific sequences such as the complete graphs, hypercubes, and tori. Indeed, many authors have remarked how little work has been done on more general classes of finite graphs \cite{MR2073175,MR2155704,MR2599196,MR3156647}. Alon, Benjamini, and Stacey \cite{MR2073175}\footnote{Here Alon, Benjamini, and Stacey also make several conjectures about percolation on finite transitive graphs, which may interest the reader.} studied percolation on expanders with bounded vertex degrees. In particular, they proved that if each graph is $d$-regular for some fixed integer $d$ and has girth tending to infinity, then the sequence has a (constant) percolation threshold at $p_n := 1/(d-1)$. Borgs, Chayes, van der Hofstad, Slade, and Spencer \cite{MR2155704,MR2165583} and Nachmias \cite{MR2570320} analysed the emergence of a cluster with volume of order $\abs{V(G_n)}^{2/3}$ for percolation on finite transitive graphs satisfying the \emph{triangle condition} or a random walk return-probability condition, respectively, both of which enforce mean-field behaviour. Frieze, Krivelevich, and Martin \cite{MR2020308} proved that if a sequence of finite regular graphs is \emph{pseudorandom} (an eigenvalue condition forcing the graph to be like the complete graph), then it has a percolation threshold at $p_n := 1/\deg(G_n)$ where $\deg(G_n)$ is the vertex degree of the $n$th graph. Bollob\'{a}s, Borgs, Chayes, and Riordan \cite{MR2599196} studied percolation on arbitrary sequences of finite graphs $(G_n)$ that are \emph{dense}, meaning that ${\liminf_{n \to \infty} \frac{ \abs{E(G_n)}}{\abs{V(G_n)}^2} > 0}$. The following theorem from their paper will be used in our argument.

\begin{thm}[Bollob\'{a}s, Borgs, Chayes, Riordan 2010] \label{thm:dense}
	Let $(G_n)$ be a sequence of finite connected graphs with volume tending to infinity. Suppose that $(G_n)$ is dense. For each $n$, let $\lambda_n$ be the largest eigenvalue of the adjacency matrix of $G_n$. Then $(1/\lambda_n)_{n \geq 1}$ is a percolation threshold for $(G_n)$.
\end{thm}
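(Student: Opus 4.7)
The plan is to prove the subcritical and supercritical halves of the percolation-threshold condition separately, exploiting the spectral gap between the regimes $p\lambda_n \leq 1 - \eps$ and $p\lambda_n \geq 1 + \eps$.

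For the subcritical direction, I would control the mean cluster size via a path-counting first-moment bound combined with the spectral decomposition of the adjacency matrix $A_n$. For any vertices $u,v$, a union bound over open self-avoiding paths gives
\[
	\p_p^{G_n}(u \leftrightarrow v) \leq \sum_{k \geq 0} p^k (A_n^k)_{uv}.
\]
Summing over $u$ and $v$ and expanding $\mathbf{1}$ in the eigenbasis of $A_n$, Cauchy--Schwarz yields $\mathbf{1}^T A_n^k \mathbf{1} \leq \abs{V(G_n)} \lambda_n^k$, so the averaged mean cluster size satisfies $\chi_n := \abs{V(G_n)}^{-1} \sum_v \e_p[\abs{K_v}] \leq (1 - p\lambda_n)^{-1}$. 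For $p = (1-\eps)/\lambda_n$ this is at most $1/\eps$. A standard second-moment comparison then rules out a density-$\alpha$ cluster: if $\p_p^{G_n}(\den{K_1} \geq \alpha) \geq \delta$, then $\chi_n \geq \delta \alpha^2 \abs{V(G_n)} \to \infty$, contradicting the uniform bound.

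For the supercritical direction I would use a sprinkling decomposition $p_n = 1 - (1-q_n)(1-r_n)$ with $q_n = (1+\eps/3)/\lambda_n$ and the remaining sprinkle $r_n$ of the same order. The first round at $q_n$ is strictly supercritical by a definite margin, and I would argue that a positive fraction of vertices end up in clusters of volume at least $s_n \to \infty$. The cleanest implementation is to pass to a subsequential graphon limit $G_n \to W$ in the cut metric via Lov\'asz--Szegedy compactness, note that $\lambda_n / \abs{V(G_n)}$ converges to the operator norm $\den{T_W}_{\mathrm{op}}$ of the associated integral operator, and invoke the Bollob\'as--Janson--Riordan theorem on inhomogeneous random graphs with kernel $p_n W$, whose giant-component threshold is precisely $\den{T_W}_{\mathrm{op}}^{-1}$. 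In the sprinkling round, the density hypothesis ensures that any two vertex sets of positive density share order $\abs{V(G_n)}^2$ edges, so $r_n \asymp 1/\lambda_n \asymp 1/\abs{V(G_n)}$ suffices to merge all the medium clusters from the first round into a single giant cluster with high probability.

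The main obstacle is the first supercritical step. Without vertex-transitivity one cannot appeal to symmetry to equidistribute the cluster structure: the top Perron--Frobenius eigenvector $\phi_n$ of $A_n$ may concentrate on a small vertex subset, producing clusters that are giant in $\phi_n$-measure but not in the uniform counting measure. The density hypothesis is exactly what rules this out, since a dense graph cannot have a highly concentrated Perron eigenvector without losing too many edges. Making this quantitative, either through the BJR machinery above or through a direct exploration-with-size-biasing argument, is where the density hypothesis is truly essential, and is the step I expect to require the most work.
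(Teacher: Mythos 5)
Two remarks before the substance: this statement is not proved in the paper at all --- it is imported verbatim from Bollob\'as, Borgs, Chayes, and Riordan \cite{MR2599196} and used as a black box --- so the relevant comparison is with the original BBCR argument, whose supercritical half does in fact follow the route you sketch (subsequential cut-metric limits, convergence of the normalised top eigenvalue, transfer to the Bollob\'as--Janson--Riordan inhomogeneous model). Your subcritical half is complete and correct: the walk-counting bound $\p_p(u\leftrightarrow v)\le\sum_k p^k(A_n^k)_{uv}$, the spectral estimate $\mathbf{1}^TA_n^k\mathbf{1}\le\abs{V(G_n)}\lambda_n^k$, and the second-moment inequality $\chi_n\ge\delta\alpha^2\abs{V(G_n)}$ together rule out a cluster of any fixed density at $p=(1-\eps)/\lambda_n$, and notably use neither density nor transitivity.

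The supercritical half, however, has a genuine gap, and one of the two routes you offer rests on a false claim. It is not true that in a dense graph ``any two vertex sets of positive density share order $\abs{V(G_n)}^2$ edges'': take two cliques on $\abs{V(G_n)}/2$ vertices joined by a single edge. This sequence is connected and dense, the two halves share exactly one edge, and a sprinkle of order $1/\lambda_n\asymp1/\abs{V(G_n)}$ will not merge clusters lying in different halves; so the proposed merging of the order-$s_n$ clusters from the first round into a single giant does not go through as stated. (The theorem survives in this example only because one half already produces a linear cluster on its own --- what is needed is the existence of one dense cluster, not a global merge, and any correct sprinkling argument has to work inside dense ``spots'' rather than globally; this delicacy is exactly why BBCR resort to limit machinery.) The other route --- pass to a graphon limit and invoke BJR --- is the correct one, but as written it assumes the crux: the BJR theorem concerns the model in which every pair of vertices is joined independently with probability given by the kernel, whereas here only the edges of $G_n$ are available, each open with probability $p\sim c/\lambda_n$. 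The substantive content of \cite{MR2599196} is precisely the transfer showing that percolation on $G_n$ has the same giant-component behaviour as the BJR model with the limiting kernel, together with a compactness/uniformity argument producing a single $\alpha>0$ valid along the whole sequence rather than a separate $\alpha$ for each convergent subsequence. You flag this step as the one ``requiring the most work,'' but it is the heart of the proof, and neither it nor the claimed first-round estimate (a positive fraction of vertices in clusters of size $s_n\to\infty$ at $(1+\eps/3)/\lambda_n$) is actually established in the proposal.
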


Our proof of \cref{thm:main} does not build a percolation threshold by defining a natural candidate for the critical parameter of a finite graph in terms of, say, its vertex degrees or the largest eigenvalue of the adjacency matrix. In our setting, where we have no quantitative assumptions, we are forced to use a softer and more indirect approach. In particular, our proof of \cref{thm:main} says little about the rate at which the percolation probabilities tend to zero or to one. That said, as part of our proof we do obtain explicit lower bounds on the supercritical giant cluster density, which are analogous to the well-known mean-field lower bound for percolation on infinite graphs.

\begin{cor} \label{cor:lower_bound}
Let $(G_n)$ be a sequence of finite connected transitive graphs with volume tending to infinity that does not contain an $m$-molecular subsequence for any $m > M$, where $M$ is some positive integer. Let $(p_n)$ be a percolation threshold, which exists by \cref{thm:main}. Then for every $\eps > 0$,
\[
	\p_{(1+\eps)p_n}^{G_n} \bra{ \den{K_1} \geq \frac{\eps}{M(1+\eps)} - o(1) } = 1-o(1) \quad \text{as $n \to \infty$.}
\]
\end{cor}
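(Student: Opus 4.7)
The plan is to combine sprinkling, the subcritical-cluster control afforded by the threshold property, a Vanneuville-style coupling to lower bound the supercritical mass produced by the sprinkling, and the uniqueness-of-giant-cluster result from \cite{https://doi.org/10.48550/arxiv.2112.12778}, concluding by pigeonhole.

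Fix a small $\delta > 0$ and, using i.i.d.\ edge labels, monotonically couple the bond percolation configuration $\omega^-$ on $G_n$ at parameter $(1-\delta)p_n$ inside the configuration $\omega^+$ at parameter $(1+\eps)p_n$, so that $\omega^- \subseteq \omega^+$. By the definition of percolation threshold applied at $(1-\delta)p_n$, we have $\p\bra{\den{K_1(\omega^-)} \geq \alpha} \to 0$ for every $\alpha > 0$; in particular, with high probability every cluster of $\omega^-$ has vanishing density. The sprinkled edges $\omega^+ \setminus \omega^-$ form an independent Bernoulli configuration accounting for a fraction $(\eps+\delta)/(1+\eps)$ of the expected edge-density of $\omega^+$.

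The main step is to show that with high probability, at least an $(\eps+\delta)/(1+\eps) - o(1)$ fraction of vertices lie in clusters of $\omega^+$ whose density is bounded below by some fixed $\alpha > 0$. Heuristically, this is the standard mean-field lower bound $\theta(p) \geq 1 - p_c/p$ transferred to the finite-graph setting, and I expect to obtain it by adapting the Vanneuville coupling from \cite{https://doi.org/10.48550/arxiv.2201.08223}: by transitivity, the mass contribution of each sprinkled edge is homogeneous across the graph, so the total supercritical mass added should match the proportion of added edges. Once this is in hand, the no-$m$-molecular-subsequence hypothesis for $m > M$ lets us invoke the uniqueness result from \cite{https://doi.org/10.48550/arxiv.2112.12778}: with high probability, this supercritical mass concentrates into the top $M$ clusters of $\omega^+$, i.e.\ $\sum_{i=1}^M \den{K_i(\omega^+)} \geq (\eps+\delta)/(1+\eps) - o(1)$. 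By pigeonhole, with high probability $\den{K_1(\omega^+)} \geq \frac{\eps+\delta}{M(1+\eps)} - o(1)$; choosing $\delta = \delta_n \to 0$ slowly enough to keep subcritical control yields the stated bound.

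The main obstacle is the central step: producing the sharp mean-field supercritical mass $\eps/(1+\eps) - o(1)$ in the non-quantitative transitive setting of the paper. A direct Russo--Margulis or susceptibility-based argument would, I believe, lose the sharp constant, so the Vanneuville coupling is needed because it controls each sprinkled edge's contribution to the supercritical component precisely enough to recover the mean-field rate.
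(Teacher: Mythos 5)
There is a genuine gap: the step you label ``the main step'' --- that with high probability an $\frac{\eps+\delta}{1+\eps}-o(1)$ fraction of vertices lies in clusters of $\omega^+$ of density at least some fixed $\alpha$ --- is precisely the hard content of the paper (the approximately monotone coupling lemma combined with the sharp-density and supercritical-uniqueness properties), and your sketch only asserts it (``I expect to obtain it by adapting the Vanneuville coupling''). Worse, your version is anchored on the wrong side of the threshold. The coupling argument needs a \emph{supercritical} anchor: it shows that if $\e_{(1+\eps)p_n}\den{K_1}$ were too small, then percolation at a parameter still above $p_n$ would be approximately dominated by a measure conditioned to have no dense cluster at $o$, contradicting $\liminf_n \e_{q_n}\den{K_1}>0$ at some lower parameter $q_n$. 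The threshold hypothesis supplies such an anchor at $q_n=(1+\delta)p_n$, and this yields density $\frac{\eps-\delta}{1+\eps}$ (whence $\frac{\eps}{M(1+\eps)}-o(1)$ after letting $\delta\downarrow 0$), \emph{not} $\frac{\eps+\delta}{1+\eps}$. Subcritical control at $(1-\delta)p_n$ gives no lower bound on supercritical mass at all, and the heuristic ``the supercritical mass added matches the proportion of sprinkled edges'' cannot be right: it would apply equally to every $\delta\in(0,1)$ (all of $(1-\delta)p_n$ are subcritical), forcing $\den{K_1}\geq 1-o(1)$ at $(1+\eps)p_n$, which is false already for the complete graphs.

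The second gap is the appeal to ``the uniqueness result'' to conclude that the supercritical mass concentrates into the top $M$ clusters. \cref{thm:sup} only characterises when $\den{K_2}\to 0$ for non-molecular sequences; there is no off-the-shelf ``top $M$ clusters'' statement to invoke for a sequence that merely avoids $m$-molecular subsequences with $m>M$. In the paper the factor $\frac{1}{M}$ is produced constructively: pass to a subsequence that is either non-molecular (factor $1$) or $m$-molecular with $m\leq M$ \emph{maximal}, decompose the graphs into their $m$ atoms $A_n$ (maximality guarantees $(A_n)$ has no molecular subsequence, so the mean-field machinery applies to it), transfer supercriticality of $(1+\delta)p_n$ from $G_n$ to $A_n$ using \cref{thm:dense} and the bound $\abs{F_n}\leq C\abs{V(G_n)}$ (so $\lambda(A_n)/\lambda(G_n)\to 1$), and then use that the atom's giant is a cluster of $G_n$ of density at least $\frac{1}{m}\bra{\frac{\eps}{1+\eps}-o(1)}$. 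Your proposal identifies the right ingredients in outline (Vanneuville-style coupling plus the molecular/uniqueness structure), but as written it neither proves the mean-field step nor supplies the atom-decomposition and threshold-comparison arguments that produce the $\frac{1}{M}$; by contrast, once those lemmas are in place, the corollary itself is a one-line application of \cref{lem:mean-field_whp_for_good_sequences} at the anchor $(1+\delta)p_n$ followed by a diagonal argument in $\delta$.
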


Moreover, by simply bounding a percolation threshold $(p_n)$ below by the threshold for being dominated by a subcritical branching process and above by the threshold for connectivity, we obtain the following optimal bounds on its location.

\begin{prop} \label{prop:sharp_bounds}
Let $(G_n)$ be a sequence of finite connected transitive graphs with volume tending to infinity. Let $d_n$ denote the vertex degree of $G_n$. If $(G_n)$ has a percolation threshold $(p_n)$, then it satisfies
\[
	(1-o(1))\frac{1}{d_n - 1} \leq p_n \leq (2+o(1))\frac{\log \abs{V(G_n)}}{d_n} \quad \text{as $n \to \infty$}.
\]
\end{prop}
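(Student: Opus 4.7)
The plan is to prove the two inequalities independently, each by contradicting the defining property of a percolation threshold along a hypothetical violating subsequence. The key transitivity input, common to both directions, is the identity
\[
\p_p\bra{v\in K_1,\,\den{K_1}\geq\alpha} = \e_p\sqbra{\den{K_1}\,\1_{\den{K_1}\geq\alpha}} \geq \alpha\,\p_p\bra{\den{K_1}\geq\alpha},
\]
which combined with $\abs{K_v}\geq\alpha\abs{V(G_n)}$ on the event $v\in K_1,\,\den{K_1}\geq\alpha$ yields $\p_p(\den{K_1}\geq\alpha) \leq \e_p\sqbra{\abs{K_v}}/(\alpha^2\abs{V(G_n)})$.

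For the lower bound, I would combine this with the self-avoiding-walk bound
\[
\e_p\sqbra{\abs{K_v}} \leq 1 + \sum_{k\geq 1} d_n(d_n-1)^{k-1}p^k = 1 + \frac{d_n p}{1-(d_n-1)p} \qquad\text{whenever } (d_n-1)p<1.
\]
If $(1+\eps)(d_n-1)p_n\leq 1-\delta$ held along a subsequence for fixed $\delta,\eps>0$, then at $p=(1+\eps)p_n$ the right-hand side of the preceding transitivity bound would be $O(1/\abs{V(G_n)})\to 0$ for every $\alpha>0$, contradicting supercriticality of $(1+\eps)p_n$. Sending $\eps,\delta\downarrow 0$ then gives $p_n\geq (1-o(1))/(d_n-1)$.

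For the upper bound, the plan is to prove the connectivity claim: whenever $p\cdot d_n\geq (2+\eta)\log\abs{V(G_n)}$ with $\eta>0$, $\p_p^{G_n}\bra{\omega\text{ is connected}} \to 1$. Once available, this rules out the subcritical condition $\p_{(1-\eps)p_n}(\den{K_1}\geq\alpha)\to 0$ at $\alpha$ close to $1$ unless $(1-\eps)p_n d_n\leq (2+o(1))\log\abs{V(G_n)}$, and $\eps,\eta\downarrow 0$ yields the bound. The connectivity claim would follow from a first-moment estimate over connected $k$-subsets $S\ni v$, using $\p(K_v=S)\leq (1-p)^{\abs{\partial S}}$, the edge-boundary bound $\abs{\partial S}\geq k(d_n-k+1)$ for $d_n$-regular graphs, and the enumeration $N_k(v)\leq (ed_n)^{k-1}$ on connected $k$-subsets through $v$; this makes $\p(\abs{K_v}=k)\lesssim (ed_n\,e^{-pd_n/2})^{k-1}$ geometrically summable in $k\leq d_n/2$ exactly once $pd_n$ exceeds $2\log\abs{V(G_n)}$, which is where the constant $2$ comes from.

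The main obstacle is controlling clusters of intermediate size $d_n/2<k\leq\abs{V(G_n)}/2$, for which the elementary edge-boundary bound degenerates. Fortunately the target bound is only non-vacuous when $d_n>2\log\abs{V(G_n)}$; in the dense regime $d_n=\Theta(\abs{V(G_n)})$ one can directly invoke \cref{thm:dense} (the largest adjacency eigenvalue $\lambda_n$ of a connected regular graph equals $d_n$, so $p_n=1/\lambda_n=1/d_n$ is automatically below the target), while in the remaining range the argument is closed by exploiting that every non-trivial cut in a connected vertex-transitive graph has edge-boundary at least $d_n$. The lower bound, by contrast, is a generic branching-process calculation requiring no structural assumption beyond transitivity.
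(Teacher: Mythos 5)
Your lower bound is correct and is essentially the paper's own argument: a path-counting (branching) estimate showing $\e_{(1-\eps)/(d_n-1)}^{G_n}\abs{K_o}=O(1)$, hence no supercritical behaviour at $(1-\eps)/(d_n-1)$; the transitivity identity you insert is a harmless variant of the paper's ``$\e\den{K_o}\to 0$ implies $\e\den{K_1}\to 0$'' step.

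The upper bound, however, has a genuine gap. Your first-moment bound over connected $k$-sets, using $\abs{\partial S}\geq k(d_n-k+1)$ and the enumeration $(ed_n)^{k-1}$, only controls clusters of size $k\leq d_n/2$, as you acknowledge, and your proposed fix for intermediate sizes fails exactly in the regime that matters, namely $2\log\abs{V(G_n)} < d_n = o(\abs{V(G_n)})$ (e.g.\ $K_m\square K_m$ with $m=\sqrt{\abs{V(G_n)}}$, or $C_m\square K_d$ with $d$ polylogarithmic in $\abs{V(G_n)}$). Knowing only that every nontrivial cut has at least $d_n$ boundary edges gives a factor $(1-p)^{d_n}\approx\abs{V(G_n)}^{-2}$ per candidate set, while the number of connected $k$-sets through a vertex is exponential in $k$ (and the number of cuts, absent further input, is exponential in $\abs{V(G_n)}$), so the union bound cannot close; moreover, in $C_m\square K_d$ there are connected sets of size about half the graph whose edge boundary is only $2(d+2)$, so no isoperimetric strengthening of your boundary bound can rescue a sum over connected sets. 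The missing ingredient is precisely what the paper invokes: the Karger--Stein bound that a graph with edge connectivity $c$ has at most roughly $\abs{V}^{2\alpha}$ cuts of weight at most $\alpha c$, which makes a union bound over \emph{cuts} work and yields connectivity with high probability at $p=(2+o(1))\log\abs{V(G_n)}/d_n$ (Theorem 4.1 of the cited reference, combined with the Mader--Watkins fact that connected transitive graphs have edge connectivity $d_n$). That cut-counting bound, not the $k\leq d_n/2$ boundary estimate, is the true source of the constant $2$. Your dense-regime fallback via \cref{thm:dense} is fine (after the small observation that two percolation thresholds for the same sequence must agree up to a $1+o(1)$ factor), but it leaves the intermediate-degree regime unproved, so as written the upper bound does not go through.
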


When $(G_n)$ has a percolation threshold $(p_n)$ and converges locally to an infinite graph $G$, one might ask how the location of $(p_n)$ relates to the critical parameter $p_c$ and the uniqueness threshold $p_u$ of $G$. See Remark 1.6 in \cite{https://doi.org/10.48550/arxiv.2112.12778} for a discussion of this question. Let us simply note that $(p_n)$ typically (but not always) converges to $p_c$. For example, this is the case when $(G_n)$ is a sequence of transitive expanders and $G$ is nonamenable \cite{Benjamini:2011vx}.

We conclude this discussion by explaining how our result relates to the general theory of sharp thresholds. Consider a large collection of independent random bits $b := (b_i)_{1 \leq i \leq n} \in \{ 0,1 \}^n$ each sampled according to the Bernoulli($p$) distribution for some $p \in [0,1]$. For many natural monotone events $A \subseteq \{ 0,1 \}^n$, the probability that $b$ belongs to $A$ has a sharp threshold: it increases from $o(1)$ to $1-o(1)$ as $p$ increases across an interval of width $o(p(1-p))$. There is a general philosophy that a sharp threshold occurs if and only if $A$ is sufficiently \emph{symmetric}/\emph{global}. For example, the event that the Erd\H{o}s-R\'{e}nyi random graph is connected has a sharp threshold, but the event that it contains a triangle and the event that a particular edge is present do not. \Cref{thm:main} can be understood as a kind of extension of this principle to the existence of a phase transition in a statistical mechanics model. Indeed, \Cref{thm:main} says that "having a giant cluster" typically has a threshold around a critical parameter $p$ with a threshold width\footnote{The threshold width fails to be $o(1-p)$ even for simple examples such as the sequence of cycles.} $o(p)$ whenever the underlying graph is \emph{transitive}, which is a symmetry/homogeneity condition. However, we would like to emphasise that "having a giant cluster" is not a well-defined event, so it does not fall within the usual scope of sharp-threshold techniques.

\subsection{Molecular sequences and the supercritical phase} \label{subsec:defining:molecular_seq}

Let $(G_n)$ be a sequence of finite connected transitive graphs with volume tending to infinity. If $(G_n)$ has a percolation threshold $(p_n)$, then a sequence of parameters $(q_n)$ is called \emph{supercritical} if there exist $\eps >0$ and $N < \infty$ such that for all $n \geq N$ satisfying $q_n < 1$, we have
\[
	q_n \geq (1+\eps) p_n.
\]
As in \cite{https://doi.org/10.48550/arxiv.2104.05607,https://doi.org/10.48550/arxiv.2112.12778}, we generalise this definition to the situation that there may or may not be a percolation threshold by saying that a sequence of parameters $(q_n)$ is \emph{supercritical} if there exist $\eps > 0$ and $N < \infty$ such that for all $n \geq N$ satisfying $q_n <1$, we have
\[
	\p_{(1-\eps)q_n} \bra{ \den{K_1} \geq \eps } \geq \eps.
\]
(The `$q_n < 1$' condition is a technicality that guarantees that $(G_n)$ always admits at least one supercritical sequence of parameters, namely the sequence $(p_n)$ with $p_n :=1$ for all $n$.)

In our work with Hutchcroft \cite{https://doi.org/10.48550/arxiv.2112.12778}, we showed that not having a molecular subsequence is the geometric counterpart to the supercritical giant cluster being unique. Here is the precise result from that paper.

\begin{defn}
	We say that $(G_n)$ has the \emph{supercritical uniqueness property} if for every supercritical sequence of parameters $(q_n)$,
	\[
		\lim_{n \to \infty} \p_{q_n}^{G_n} \bra{ \den{K_2} \geq \alpha } = 0 \qquad \text{for all $\alpha > 0$}.
	\]
\end{defn}

\begin{thm}[Easo and Hutchcroft, 2021] \label{thm:sup}
A sequence of finite connected transitive graphs with volume
tending to infinity has the supercritical uniqueness property if and only if it does not contain a molecular
subsequence.
\end{thm}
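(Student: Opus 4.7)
The plan is to handle the two implications separately: the easy direction constructs non-uniqueness from a molecular witness, while the hard direction extracts a molecular witness from any failure of uniqueness.

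\textbf{Molecularity implies non-uniqueness.} Suppose $(G_{n_k})$ is $m$-molecular with aut-invariant cuts $F_{n_k}$ of size $\leq C|V(G_{n_k})|$. Because $F_{n_k}$ is preserved by $\operatorname{Aut}(G_{n_k})$ and $G_{n_k}$ is transitive, the automorphism group permutes the $m$ components of $G_{n_k} \setminus F_{n_k}$; by orbit--stabiliser, the stabiliser of any one component acts transitively on it. Hence the components are pairwise isomorphic transitive graphs of volume $|V|/m$, and the density of $(G_{n_k})$ passes to them. Apply \cref{thm:dense} to the component sequence to obtain a percolation threshold $1/\lambda_{n_k}$, and take $q_{n_k} := (1+\eps)/\lambda_{n_k}$ for a small fixed $\eps > 0$. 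Density forces $\lambda_{n_k} = \Theta(|V|)$, so $q_{n_k}|F_{n_k}| = O(1)$ and the probability that no edge of $F_{n_k}$ opens is bounded below. Conditional on that event, percolation on the $m$ components is independent and each produces an internal giant of density $\geq \rho(\eps) > 0$ with probability tending to $1$, and these giants are automatically distinct. Hence $\den{K_2} \geq \rho(\eps)/m$ with uniformly positive probability, and a direct check shows that $(q_{n_k})$ is supercritical, contradicting the supercritical uniqueness property.

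\textbf{Non-uniqueness implies molecularity.} Contrapositively, suppose a supercritical $(q_n)$ and $\alpha > 0$ satisfy $\p_{q_n}^{G_n}(\den{K_2} \geq \alpha) \geq \alpha$ along a subsequence. A preliminary sprinkling argument exploiting the supercriticality of $(q_n)$ upgrades this to $\den{K_1} \geq \beta > 0$ with probability tending to $1$, so the number $N_n$ of clusters of density $\geq \alpha/2$ is almost surely bounded by $2/\alpha$. Passing to a further subsequence, fix an integer $m \in \{2,\ldots,\lfloor 2/\alpha\rfloor\}$ with $\p_{q_n}(N_n = m) \geq \gamma > 0$ uniformly in $n$. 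Consider the random inter-giant edge set $\partial_m \omega := \{e \in E(G_n) : \text{the endpoints of } e \text{ lie in distinct clusters of density} \geq \alpha/2\}$, together with the $\operatorname{Aut}(G_n)$-invariant edge function $f(e) := \p_{q_n}(e \in \partial_m\omega,\, N_n = m)$. A sprinkling / pivotal estimate, comparing $\p_{q_n}$ to $\p_{(1+\delta)q_n}$ and using that sprinkling merges the $m$ giants into a single cluster, bounds $\sum_e f(e) = O(|V|)$. Define $F_n$ as the union of edge orbits on which $f$ exceeds a small constant; then $F_n$ is aut-invariant with $|F_n| = O(|V|)$. On a positive-probability subevent of $\{N_n = m\}$, every inter-giant edge lies in $F_n$, so $G_n \setminus F_n$ disconnects the $m$ giants; a deterministic tidying step (absorbing residual small components into their host giant via an aut-invariant rule, e.g.~majority neighbouring giant broken by a canonical orbit ordering) yields exactly $m$ components. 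Density of $(G_n)$ follows from each component being a transitive graph carrying an $\omega$-giant of density $\beta/m$ with $\Omega(|V|^2)$ internal open edges in expectation.

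\textbf{Main obstacle.} The technical heart of the argument is the sprinkling / pivotal estimate bounding $\sum_e f(e)$ by $O(|V|)$. Naive Russo differentiation of $\e|K_1|$ controls the number of pivotals only up to $O(|V|/q_n)$, which is far too weak when $q_n \to 0$. The sharpening must use both transitivity (to distribute the pivotal load uniformly across aut-orbits and thereby recover a per-orbit linear bound) and the combinatorial input that at most $m - 1$ merger events occur between $q_n$ and $(1+\delta)q_n$, together with a more refined potential function than $\e|K_1|$, such as an entropy or the pair-connectivity sum $\sum_{u,v}\p(u \leftrightarrow v)$. A subsidiary difficulty is the deterministic tidying step converting a ``$\geq m$ components'' separation into exactly $m$ components while preserving aut-invariance of $F_n$, which requires an equivariant choice of absorption rule and use of the fact that small leftover components have combined density $o(1)$.
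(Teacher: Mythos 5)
First, a point of orientation: this paper does not prove \cref{thm:sup} at all — it is imported from the companion work of Easo and Hutchcroft, so there is no internal proof to compare against, and what you have attempted is the main theorem of that (long) paper. Your first direction is essentially the correct and natural argument: for an $m$-molecular subsequence the atoms are isomorphic, connected, transitive and dense; \cref{thm:dense} gives them a threshold at $1/\lambda(A_n)$; at $q_n=(1+\eps)/\lambda(A_n)$ the event that no edge of $F_n$ is open has probability bounded below because $q_n\abs{F_n}=O(1)$, and conditionally on it the $m$ atoms percolate independently, each producing its own giant, so $\den{K_2}$ stays bounded below with non-vanishing probability along a sequence that is supercritical for $(G_n)$ (since $\lambda(A_n)\leq\lambda(G_n)$ and $(G_n)$ is dense). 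Modulo routine bookkeeping, that half is fine.

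The converse — failure of supercritical uniqueness forces a molecular subsequence — is the real content of the theorem, and your sketch does not establish it. Concretely: (i) the estimate $\sum_e f(e)=O(\abs{V(G_n)})$ is nowhere proved; you yourself flag it as the main obstacle and only list tools that ``must'' be used, so the heart of the argument is missing. (ii) Even granting it, the deduction breaks: with $F_n$ the union of orbits on which $f$ exceeds a constant, the expected number of inter-giant edges outside $F_n$ is still $O(\abs{V(G_n)})$, not $o(1)$, so Markov's inequality does not yield a positive-probability event on which every inter-giant edge lies in $F_n$. (iii) Even on such an event, deleting the deterministic set $F_n$ would not disconnect $G_n$ into $m$ pieces: the giants occupy only a constant fraction of the vertices, and the remaining vertices, together with all edges incident to them (untouched by your construction), can reconnect everything in $G_n\setminus F_n$. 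Molecularity asserts a deterministic, $\operatorname{Aut} G_n$-invariant sparse cut of the whole graph into exactly $m$ components, and passing from a random multi-giant configuration to such a cut is precisely the difficult structural step; your ``tidying'' remark does not supply a mechanism for it. (iv) The density of $(G_n)$, which is part of the definition of molecular, is justified by the claim that a giant carries $\Omega(\abs{V(G_n)}^2)$ open edges in expectation; this is false, since at parameters of order $1/\lambda_n$ the total number of open edges is only $O(\abs{V(G_n)})$, so density must be derived by a different route. For comparison, as this paper itself recalls in \cref{subsec:defining:molecular_seq}, a crucial ingredient in the cited proof is the sharp-density property for sequences without molecular subsequences, an ingredient entirely absent from your outline; so beyond the easy direction, the proposal is a plan whose central steps are either missing or would fail as stated.
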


A crucial step in the proof of the above theorem is that if $(G_n)$ does not contain a molecular subsequence, then it has the \emph{sharp-density property}. To prove \cref{thm:main}, we only need to recall that the sharp-density property guarantees that for every density $\alpha \in (0,1]$, constant $\eps > 0$, and supercritical sequence of parameters $(p_n)$,
\[
	\liminf_{n \to \infty} \p_{p_n}^{G_n} \bra{ \den{K_1} \geq \alpha } > 0 \qquad \text{implies} \qquad \lim_{n \to \infty} \p_{(1+\eps)p_n}^{G_n} \bra{ \den{K_1} \geq \alpha } = 1.
\]
This implies that the event that ``there exists a cluster with density at least $\alpha$'' undergoes a sharp threshold for each fixed $\alpha$. Notice that this does not immediately imply the existence of a percolation threshold. One obstruction could be how these thresholds are spaced: one could imagine that the $\alpha$-density threshold always occurs at $\left(\frac{\alpha}{\sqrt{n}}\right)_{n \geq 1}$, say, in which case there would be no percolation threshold. The implication is not clear even if we additionally require that the $G_n$'s have uniformly bounded vertex degrees: see our footnote on page 3, and see Conjecture 1.2 in \cite{MR2917769} for an analogous situation in the context of expanders.

\subsection{Proof strategy} \label{subsec:proof_strategy}

Most of our work goes into showing that if $(G_n)$ does not contain a molecular subsequence, then it has a percolation threshold. In this section, we outline this step. To extend this result to the case that $(G_n)$ contains an $m$-molecular subsequence for at most finitely many integers $m$, we apply the same argument to a molecular sequence's constituent sequence of \emph{atoms}. We will deduce that converse as an immediate consequence of a corollary from \cite{https://doi.org/10.48550/arxiv.2112.12778}.

Let $(G_n)$ be a sequence of finite connected transitive graphs with volume tending to infinity. Suppose we want to prove that $(G_n)$ has a percolation threshold. Although "having a giant cluster" is not an event, we could try taking the event $\{ \den{K_1} \geq \delta_n\}$ as a proxy, where $(\delta_n)$ is a sequence tending to zero very slowly. Then as a candidate for the percolation threshold, we could take $(p_n)$ where each $p_n$ is defined to be the unique parameter satisfying
\[
	\p_{p_n}^{G_n} \bra{ \den{K_1} \geq \delta_n } = \frac{1}{2}.
\]
As a sanity check, notice that if $(G_n)$ does have a percolation threshold, then by a diagonal argument, there is a percolation threshold $(p_n)$ that arises in this way.

To prove that our candidate $(p_n)$ is in fact a percolation threshold, we need two ingredients. The first ingredient is that the emergence of a cluster of any \emph{constant} density has a threshold about a critical parameter $p$ with a threshold width $o(p)$. This immediately handles the subcritical half of our task: since $\lim_{n \to \infty}\delta_n = 0$, it guarantees that $\lim_{n \to \infty} \p_{(1-\eps)p_n}^{G_n} \bra{ \den{K_1} \geq \alpha } = 0$ for every $\alpha,\eps > 0$. As mentioned in \cref{subsec:defining:molecular_seq}, the existence of these constant-density thresholds is implied by the sharp-density property, which holds whenever $(G_n)$ has no molecular subsequences.

The second ingredient is a universal lower bound on the supercritical giant cluster density. This says that for every $\eps > 0$, there exists $\delta > 0$ such that for every sequence of parameters $(p_n)$, if $\liminf_{n \to \infty} \e_{p_n}^{G_n} \den{K_1} > 0$, then $\liminf_{n \to \infty} \e_{(1+\eps)p_n}^{G_n} \den{K_1} \geq \delta$. Together with the first ingredient, this ensures that by taking $(\delta_n)$ to decay slowly enough, for every $\eps > 0$, there exists $\delta > 0$ with $\lim_{n \to \infty} \p_{(1+\eps)p_n}^{G_n}\bra{\den{K_1} \geq \delta } = 1$. We will prove that this too holds whenever $(G_n)$ has no molecular subsequences.

This second ingredient is reminiscent of the mean-field lower bound from the study of percolation on infinite graphs. This says, for example, that every vertex $v$ in an infinite transitive graph $G$ satisfies
\[
	\p_{(1+\eps)p_c(G)}^G \bra{ K_v \text{ is infinite} } \geq \frac{\eps}{1+\eps}.
\]
Together with the exponential decay of $\abs{K_v}$ throughout the subcritical phase, this forms what is known as the \emph{sharpness of the phase transition} for percolation on infinite graphs. This foundational result was first proved in \cite{MR852458,MR874906,MR894542} but has recently been reproved by more modern arguments in \cite{MR3477351,MR3898174,MR4408005,https://doi.org/10.48550/arxiv.2201.08223}. The main obstacle to adapting these proofs to our finite setting is that they concern the event that the cluster at a vertex reaches a certain distance or exceeds a certain volume, whereas we care about the cluster's volume as a proportion of the total vertices.

Moreover, while the sharpness of the phase transition is completely general - applying to all infinite transitive graphs - there is no universal lower bound on the supercritical giant cluster density that applies to every finite transitive graph\footnote{Consider the sequences $(K_n \square C_m)\seq$ for each $m \geq 3$, where $K_n$ is a complete graph and $C_m$ is a cycle.}. So to adapt one of these proofs to our setting, we need to include information about the underlying sequence of graphs in the argument itself, for example, that it has the supercritical uniqueness property, or equivalently, that it has no molecular subsequences.

Very recently, Vanneuville \cite{https://doi.org/10.48550/arxiv.2201.08223} gave a new proof of the sharpness of the phase transition via couplings. Unlike previous arguments, this one does not rely on a differential inequality. Vanneuville's key insight was that by using an exploration process, we can upper bound the effect of conditioning on a certain decreasing event, namely the event that a vertex's cluster does not reach a certain distance, by the effect of slightly decreasing the percolation parameter. Our strategy is to apply this argument but with the event that a vertex's cluster has small density.

Rather than building an exact monotone coupling of the conditioned percolation measure and the percolation measure with a smaller parameter, which is impossible, we will construct a coupling that is monotone outside of an error event. Then under the additional hypothesis that $(G_n)$ has the supercritical uniqueness property, we will prove that this error event has probability tending to zero. Just as Vanneuville's coupling immediately yields the mean-field lower bound, our \emph{approximately monotone} coupling will tell us that for every sequence of parameters $(p_n)$ and every constant $\eps > 0$, if $\liminf_{n \to \infty} \e_{p_n}^{G_n} \den{K_1} > 0$, then
\[
	\liminf_{n \to \infty} \e_{(1+\eps)p_n}^{G_n} \den{K_1} \geq \frac{\eps}{1+\eps}.
\]

Let us mention that for this step - establishing a universal lower bound on the supercritical giant cluster density when we have the supercritical uniqueness property - it is possible to instead adapt Hutchcroft's proof of sharpness from \cite{MR4408005}, rather than Vanneuville's new proof. The adaptation that we found of Hutchcroft's proof is more involved than the argument presented here. For example, it invokes the universal tightness result from \cite{Hutchcroft:2021vn}. Invoking this auxiliary result also has the consequence that the universal lower bound we ultimately obtain is weaker than the mean-field bound established here.

\section{The coupling lemma} \label{sec:coupling_lemma}

In this section, we control the effect on percolation of conditioning on the event that a vertex's cluster has small density. In particular, we prove that the conditioned measure \emph{approximately} stochastically dominates percolation of a slightly smaller parameter. As mentioned in \cref{subsec:proof_strategy}, the argument in this section is inspired by \cite{https://doi.org/10.48550/arxiv.2201.08223}.

\begin{lem} \label{lem:coupling}
Let $G=(V,E)$ be a finite connected transitive graph with a distinguished vertex $o$. Let $p \in (0,1)$ be a parameter and $\alpha \in (0,1)$ a density. Define
\[
	\theta := \e_p \den{K_1}, \quad h := \p_p \bra{ \den{K_1} < \alpha \text{ or } \den{K_2} \geq \frac{\alpha}{2} }, \quad \delta := \frac{2h^{1/2}}{1-\theta-h},
\]
and assume that $\theta+h < 1$ (so that $\delta$ is well-defined and positive). Then there is an event $A$ with $\p_p \bra{ A  \mid \den{K_o} < \alpha } \leq h^{1/2}$ such that
\[
	\p_{ \bra{1 - \theta - \delta}p } \leq_{\mathrm{st}} \p_p \bra{\omega \cup \1_A = \boldsymbol{\cdot}\; \mid \den{K_o} < \alpha},
\]
where $\leq_{\mathrm{st}}$ denotes stochastic domination with respect to the usual partial order $\preceq$ on $\{0,1\}^E$, and $\1_A$ denotes the random configuration with every edge open on $A$ and every edge closed on $A^c$.
\end{lem}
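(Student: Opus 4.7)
The plan is to adapt the exploration-based coupling of Vanneuville \cite{https://doi.org/10.48550/arxiv.2201.08223} from the distance-based event $\{o \not\leftrightarrow \partial B_n\}$ to the volume-based event $\{\|K_o\| < \alpha\}$. I would construct, on a common probability space, random configurations $\omega \sim \p_p(\cdot \mid \|K_o\| < \alpha)$ and $\omega' \sim \p_{(1-\theta-\delta)p}$ together with a measurable event $A$ such that $\omega' \preceq \omega \cup \1_A$ almost surely and $\p_p(A \mid \|K_o\| < \alpha) \leq h^{1/2}$.

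I would build the coupling using a common family of i.i.d.\ uniform $[0,1]$ variables $(U_e)_{e \in E}$, setting $\omega_e := \1\{U_e \leq p\}$ and defining $\omega'$ via a breadth-first exploration of $K_o(\omega)$. On edges outside the explored region I would set $\omega'_e := \1\{U_e \leq (1-\theta-\delta)p\}$, which yields the monotone coupling $\omega' \preceq \omega$ on these edges and the correct Bernoulli-$(1-\theta-\delta)p$ marginal. On the edges examined during the exploration --- namely the boundary $\partial K_o$ and the interior $E(K_o)$ --- the naive thinning produces a biased joint law under the conditioning $\{\|K_o\| < \alpha\}$, so following Vanneuville I would re-sample $\omega'$ on these exploration-touched edges using independent auxiliary randomness, calibrated so that the joint law of $\omega'$ on the conditioned probability space is exactly $\p_{(1-\theta-\delta)p}$. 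The event $A$ is defined to be the event that this re-sampling introduces an open edge not already present in $\omega$; on $A^c$ the domination $\omega' \preceq \omega$ holds edge-by-edge, and on $A$ the domination $\omega' \preceq \omega \cup \1_A$ is trivial.

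To bound $\p_p(A \mid \cdot)$, I would control the expected mass of the auxiliary additions via the total defect of the conditional edge marginals. A direct decomposition using $\e_p|\omega| = p|E|$ gives
\[
\sum_{e \in E} \bigl[p - \p_p(\omega_e = 1 \mid \|K_o\| < \alpha)\bigr] \;=\; \frac{\p_p(\|K_o\| \geq \alpha)}{\p_p(\|K_o\| < \alpha)} \bigl(\e_p(|\omega| \mid \|K_o\| \geq \alpha) - p|E|\bigr),
\]
with denominator at least $1-\theta-h$ since $\p_p(\|K_o\| \geq \alpha) \leq \theta + h$. The critical input is to improve the trivial $O(\theta+h)$ bound on the numerator to $O(h)$: on the complement of the bad event $B := \{\|K_1\| < \alpha \text{ or } \|K_2\| \geq \alpha/2\}$ the event $\{\|K_o\| \geq \alpha\}$ coincides with $\{o \in K_1\}$, and by transitivity the corresponding contribution to $\e_p(|\omega| \mathbf{1}_{\{\|K_o\| \geq \alpha\}})$ cancels against the $\theta |E| p$ portion of $\e_p|\omega|$; only the portion of mass supported on $B$ survives, giving the $O(h)$ bound. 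Markov's inequality at level $h^{1/2}$ then delivers $\p_p(A \mid \cdot) \leq h^{1/2}$, with constants matching the declared $\delta = 2h^{1/2}/(1-\theta-h)$.

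\emph{Main obstacle.} The delicate point is the cancellation argument that upgrades the naive $O(\theta + h)$ bound on the numerator to the sharper $O(h)$; without this improvement, the parameter reduction would be proportional to $\sqrt{\theta+h}$ rather than $\sqrt{h}$, which is useless when $\theta$ is bounded away from zero (the relevant regime for a supercritical lower bound). Executing this cancellation rigorously requires the uniqueness of the giant cluster on $B^c$ together with transitivity, and is precisely where the Easo--Hutchcroft supercritical uniqueness framework from \cite{https://doi.org/10.48550/arxiv.2112.12778} enters the argument. A secondary subtlety is that Vanneuville's exploration, originally designed to stop at a fixed radius, must here be adapted to explore $K_o$ to completion in the volume-based setting, and the auxiliary re-sampling on $E(K_o) \cup \partial K_o$ must be designed so that its expected contribution to $A$ is localized rather than spreading over all of $E$.
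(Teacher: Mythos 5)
There is a genuine gap, and it is structural rather than a matter of constants. In your coupling you first generate the conditioned configuration $\omega$ (or at least its exploration of $K_o$) and only afterwards produce $\omega'\sim\p_{(1-\theta-\delta)p}$, by thresholding the shared uniforms off the explored region and re-sampling with \emph{independent} auxiliary randomness on the touched edges $E(K_o)\cup\partial K_o$, defining $A$ as the event that this re-sampling opens an edge closed in $\omega$. But conditionally on the exploration, $\omega$ is deterministically all-closed on $\partial K_o$, while $\omega'$ must carry independent Bernoulli$\bra{(1-\theta-\delta)p}$ marginals there; hence \emph{any} such re-sampling has $\p\bra{A\mid\den{K_o}<\alpha}\geq 1-(1-(1-\theta-\delta)p)^{\abs{\partial K_o}}$, which is close to $1$ whenever $K_o$ is nonempty (its boundary contains at least $\deg G$ edges), not at most $h^{1/2}$. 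Your marginal-defect identity cannot repair this: it controls the \emph{average} single-edge defect $\sum_e\sqbra{p-\p_p(\omega_e=1\mid\den{K_o}<\alpha)}$, whereas the failure of monotonicity is a joint-law phenomenon concentrated on the random set $\partial K_o$, and no bound on single-edge marginals rules it out. (Two secondary issues: setting $\omega_e:=\1\{U_e\leq p\}$ gives the unconditioned law, not $\p_p(\cdot\mid\den{K_o}<\alpha)$; and the claimed $O(h)$ ``cancellation'' is unsubstantiated, since $\e_p\sqbra{(\abs{\omega}-p\abs{E})\1\{o\in K_1\}}$ is a sum of edge--event covariances, essentially a Margulis--Russo-type derivative, with no reason to be of order $h$. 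Also, no supercritical-uniqueness input is needed inside this lemma at all; it enters only later, to show $h\to 0$.)

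The missing idea is to never fix $\omega$ first. The paper builds $\omega\sim\p_p(\cdot\mid\den{K_o}<\alpha)$ \emph{sequentially} along the exploration, setting $\omega_{\rho_t}:=\1\{U_{\rho_t}\leq\p_p(\rho_t\text{ open}\mid\mathcal F_{t-1},\den{K_o}<\alpha)\}$, while the $\p_q$ sample is $\1\{U_e\leq q\}$ with the \emph{same} uniforms, $q=(1-\theta-\delta)p$. This aligns the randomness so that the closed boundary edges of $K_o$ in $\omega$ come from large uniforms, which also close them in the $\p_q$ sample; monotonicity then holds up to the stopping time $\tfail$ at which the conditional probability of the next revealed edge first drops below $q$, and $A:=\{\tfail<\abs{E}\}$. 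The quantitative heart is a lower bound on that conditional probability: since open-and-pivotal is impossible under the conditioning, it equals $p\bra{1-\p(\text{closed and pivotal}\mid\cdot)}$, and pivotality forces by pigeonhole either $\den{K_o}\geq\alpha/2$ (absorbed into the bad event $B$) or a cluster of density $\alpha/2$ in the unexplored territory off $\overline{K_o}$, whose conditional probability is at most $\theta+h$ by freshness of that territory plus transitivity. Comparing with the definition of $q$ shows that on $\{\tfail<\abs{E}\}$ one must have $\p(B\mid\mathcal F_{\tfail},\den{K_o}<\alpha)\geq h^{1/2}/(1-\theta-h)$, and Markov's inequality together with $\p(B\mid\den{K_o}<\alpha)\leq h/(1-\theta-h)$ yields $\p(A\mid\den{K_o}<\alpha)\leq h^{1/2}$. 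Without this sequential alignment and the pivotality estimate, your construction cannot achieve the stated bound on $\p(A\mid\den{K_o}<\alpha)$.
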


\begin{proof}
To lighten notation, set $\pcond := \p_p \bra{ \; \boldsymbol{\cdot}  \mid \den{K_o} < \alpha }$ and $q := \bra{1 - \theta - \delta}p$. Our goal is to construct an approximately monotone coupling of $\p_q$ and $\pcond$. We will build this in the obvious way: by fixing an exploration process and building samples of $\p_q$ and $\pcond$ in terms of a common collection of $E$-indexed uniform random variables. The rest of the proof consists in controlling the failure of this coupling to be monotone.

Fix an enumeration of the edge set $E$. Recall the following standard method for exploring a configuration $\omega$ from $o$: Start with all edges unrevealed. Iteratively reveal the unrevealed edge of smallest index that is connected to $o$ by an open path of revealed edges until there are none. Then iteratively reveal the unrevealed edge of smallest index from among all remaining unrevealed edges until there are none. Let $\rho_1,\dots,\rho_{\abs{E}}$ be the sequence of edges as they are revealed by this process. Let $\bra{\mathcal F_t}_{0 \leq t \leq \abs{E}}$ be the filtration associated to this exploration, i.e.
\[
	\mathcal F_t := \sigma\bra{ \omega_{\rho_1} ,\dots, \omega_{\rho_t} }.
\]

We now use this exploration to construct a coupling of $\p_q$ and $\pcond$. Let $(U_e)_{e \in E}$ be a collection of independent uniform-[0,1] random variables. Recursively define $\omega_{\rho_t} := \1_{U_{\rho_t} \leq \pcond\bra{\rho_t \text{ open } \mid \mathcal F_{t-1} }}$ for every $t$ to obtain a configuration $\omega$ with law $\pcond$, and simply take $\bra{\1_{U_e \leq q}}_{e \in E}$ for a configuration with law $\p_q$. This coupling is monotone (in the direction we want) on the edges $\rho_1,\rho_2,\dots,\rho_{\tfail}$ where $\tfail$ is the stopping time defined by
\[
	\tfail := \inf \{ t : \pcond \bra{ \rho_{t+1} \text{ open} \mid \mathcal F_t } < q \},
\]
with the convention that $\inf \emptyset := \abs{E}$. So $\bra{\1_{U_e \leq q}}_{e \in E} \preceq \omega$ almost surely when $\tfail = \abs{E}$. Since $\bra{\1_{U_e \leq q}}_{e \in E} \preceq \1_{\tfail < \abs{E}}$ holds trivially when $\tfail < \abs{E}$, we know that
\[
	\p_q \leq_{\mathrm{st}} \pcond \bra{ \omega \cup \1_{\tfail < \abs{E}} = \boldsymbol{\cdot} \; }.
\]
So it suffices to verify that $\pcond \bra{ \tfail < \abs{E} } \leq h^{1/2}$. By definition of $\tfail$, we have the upper bound
\begin{equation} \label[alt]{ineq:trivial_bound_at_tfail}
	\pcond \bra{ \rho_{\tfail+1}\text{ open} \mid \mathcal F_{\tfail} } < q \quad \text{a.s.\! when $\tfail < \abs{E}$}.
\end{equation}
Our first step is to prove a complementary lower bound.

Say that an edge $e$ is \emph{pivotal} if $\den{K_o (\omega \backslash \{ e \} )} < \alpha$ but $\den{K_o (\omega \cup \{ e\})} \geq \alpha$. If $e$ is open and pivotal, then $\den{K_o} \geq \alpha$, which 
is $\pcond$-almost surely impossible. So
\begin{equation} \label{eq:inserting_nonpivotality}
	\pcond \bra{ \rho_{\tfail+1} \text{ open} \mid \mathcal F_{\tfail} } = \pcond \bra{ \rho_{\tfail+1} \text{ open and not pivotal} \mid \mathcal F_{\tfail} } \; \; \text{a.s.\! when } \tfail < \abs{E}.
\end{equation}
Suppose we reveal the edges $\rho_1,\dots,\rho_{\tfail}$ and find that $\tfail < \abs{E}$. Note that $\rho_{\tfail + 1}$ is now almost surely determined. To finish building a sample of $\pcond$, rather than continuing our exploration process, we could first sample every unrevealed edge except $\rho_{\tfail + 1}$, then sample $\rho_{\tfail + 1}$ itself. The first stage will determine whether $\rho_{\tfail + 1}$ is pivotal. If it is not pivotal, then conditioning on the event $\{\den{K_o} < \alpha\}$ will have no effect in the second stage, i.e.\! the conditional probablity that $\rho_{\tfail + 1}$ is open will simply be $p$. So we can rewrite \cref{eq:inserting_nonpivotality} as
\begin{equation} \label{eq:open_to_piv} \begin{split}
	\pcond \bra{ \rho_{\tfail+1} \text{ open} \mid \mathcal F_{\tfail} }
	&= p \pcond \bra{  \rho_{\tfail+1} \text{ not pivotal} \mid \mathcal F_{\tfail} } \\
	&= p \bra{ 1 - \pcond \bra{ \rho_{\tfail+1} \text{ pivotal} \mid \mathcal F_{\tfail} } } \quad \text{a.s.\! when } \tfail < \abs{E}.
\end{split} \end{equation}
As in our argument for \cref{eq:inserting_nonpivotality}, since it is $\pcond$-almost surely impossible for an edge to be both open and pivotal, this further implies that
\begin{equation} \label{eq:final_manipulation}\begin{split}
	\pcond \bra{ \rho_{\tfail+1} \text{ open} \mid \mathcal F_{\tfail} } &= p \bra{ 1 - \pcond \bra{ \rho_{\tfail+1} \text{ closed and pivotal} \mid \mathcal F_{\tfail} } } \quad\text{a.s.\! when } \tfail < \abs{E}.
\end{split}\end{equation}

By combining inequality \cref{ineq:trivial_bound_at_tfail} with \cref{eq:open_to_piv}, we deduce that
\[
	\pcond \bra{ \rho_{\tfail+1} \text{ pivotal} \mid \mathcal F_{\tfail} } > 0 \quad \text{a.s.\! when } \tfail < \abs{E}.
\]
So when $\tfail < \abs{E}$, we can almost surely label the endpoints of $\rho_{\tfail+1}$ by $v_-$ and $v_+$ such that $v_-$ is connected to $o$ by a path of open edges among the revealed edges $\{ \rho_1,\dots, \rho_{\tfail}\}$, whereas $v_+$ is not.

Consider a configuration $\omega$ with $\tfail < \abs{E}$ in which $\rho_{\tfail + 1}$ is closed and pivotal. By the pigeonhole principle, since $\den{ K_o(\omega \cup \{\rho_{\tfail+1}\} ) } \geq \alpha$,
\begin{equation} \label[alt]{alt:pigeonhole}
	\den{K_{v_-}} \geq \frac{\alpha}{2} \quad \text{or} \quad \den{K_{v_+}} \geq \frac{\alpha}{2}.
\end{equation}
Since closing $\rho_{\tfail+1}$ disconnects $K_o(\omega \cup \{ \rho_{\tfail + 1} \})$, the endpoints $v_-$ and $v_+$ belong to distinct clusters of $\omega$. In particular, since $v_- \in K_o$, we know that $v_+ \not\in K_o$. So \cref{alt:pigeonhole} implies that
\[
	\den{K_o} \geq \frac{\alpha}{2} \quad \text{or} \quad \big\lVert K_{v_+}(\omega \backslash \overline{K_o} )\big\rVert \geq \frac{\alpha}{2},
\]
where $\overline{K_o}:= K_o \cup \partial K_o$ and $\partial K_o$ denotes the edge boundary of $K_o$. Now $\omega$ was arbitrary, so by a union bound,
\begin{equation} \label[ineq]{ineq:piv_to_split} \begin{split}
	\pcond \bra{  \rho_{\tfail+1} \text{ closed and pivotal} \mid \mathcal F_{\tfail} } &\leq \pcond \bra{  \den{K_o} \geq \frac{\alpha}{2} \mid \mathcal F_{\tfail} } + \pcond \bra{ \big\lVert K_{v_+}(\omega \backslash \overline{K_o} )\big\rVert \geq \frac{\alpha}{2} \mid \mathcal F_{\tfail}} \\
	& \quad \quad \text{a.s.\! when } \tfail < \abs{E}.
\end{split} \end{equation}

To bound the first term in \cref{ineq:piv_to_split}, notice that $\den{K_o} \geq \frac{\alpha}{2}$ and $\den{K_o} < \alpha$ together imply the bad event $B := \{\den{K_1} < \alpha \text{ or } \den{K_2} \geq \frac{\alpha}{2}\}$. 
So because $\den{K_o} < \alpha$ occurs $\pcond$-almost surely,
\begin{equation} \label[ineq]{ineq:first_term}
	\pcond \bra{  \den{K_o} \geq \frac{\alpha}{2} \mid \mathcal F_{\tfail} } \leq \pcond \bra{ B \mid \mathcal F_{\tfail}} \quad \text{a.s.\! when } \tfail < \abs{E}.
\end{equation}

To bound the second term in \cref{ineq:piv_to_split}, define a new stopping time
\[
	\tmoat := \max \{t : \rho_t \in \overline K_o \}.
\]
(This is defined with respect to the standard exploration described in the second paragraph of the current proof environment, not the modified exploration mentioned below \cref{eq:inserting_nonpivotality}.) Just after we reveal $\rho_{\tmoat}$, since conditioning on the event $\{\den{K_o} < \alpha\}$ no longer has any effect, the distribution of the configuration on the unrevealed edges is simply
\begin{equation} \label[ineq]{ineq:just_after_tmoat}
	\pcond \bra{ \omega\vert_{E \backslash\overline{K_o}} = \boldsymbol{\cdot} \mid \mathcal F_{\tmoat} } = \p_p^{G \backslash \overline{K_o} } \leq_{\mathrm{st}} \p_p \quad \text{a.s.}
\end{equation}
When $\tfail < \abs{E}$, we showed that $\rho_{\tfail+1}$ almost surely has an endpoint belonging to $K_o$, namely $v_-$. So $\tfail < \abs{E}$ implies $\tfail < \tmoat$ almost surely. By applying this observation, \cref{ineq:just_after_tmoat}, and transitivity, we obtain
\[ \begin{split}
	\pcond \bra{ \big\lVert K_{v_+}(\omega \backslash \overline{K_o} )\big\rVert \geq \frac{\alpha}{2} \mid \mathcal F_{\tfail}} &= \econd \sqbra{ \pcond \bra{ \big\lVert K_{v_+}(\omega \backslash \overline{K_o} )\big\rVert \geq \frac{\alpha}{2} \mid \mathcal F_{\tmoat}} \mid \mathcal F_{\tfail} } \\
	&\leq \econd \sqbra{ \p_p \bra{ \den{K_{v_+}} \geq \frac{\alpha}{2} } \mid \mathcal F_{\tfail} } \\
	&= \p_p \bra{ \den{K_o} \geq \frac{\alpha}{2} } \quad \text{a.s.\! when } \tfail < \abs{E}.
\end{split} \]
When $\den{K_o} \geq \frac{\alpha}{2}$, we must have that $o \in K_1$ or $\den{K_2} \geq \frac{\alpha}{2}$. So by a union bound,
\begin{equation} \label[ineq]{ineq:second_term} \begin{split}
	\pcond \bra{ \big\lVert K_{v_+}(\omega \backslash \overline{K_o} )\big\rVert \geq \frac{\alpha}{2} \mid \mathcal F_{\tfail}} &\leq \p_p \bra{ o \in K_1 } + \p_p \bra{ \den{K_2} \geq \frac{\alpha}{2}  } \\
	&\leq \theta + h \quad \text{a.s.\! when } \tfail < \abs{E},
\end{split}\end{equation}
where $h := \p_p \bra{B}$ and $\theta := \e_p \den{K_1}$, which satisfies $ \theta = \p_p \bra{ o \in K_1 }$ by transitivity.

Plugging \cref{ineq:piv_to_split,ineq:first_term,ineq:second_term} into \cref{eq:final_manipulation} gives a lower bound on $\pcond \bra{ \rho_{\tfail+1} \text{ open} \mid \mathcal F_{\tfail} }$ when $\tfail < \abs{E}$. By contrasting this with upper bound \cref{ineq:trivial_bound_at_tfail} and expanding the definition of $q$, we deduce that
\[
	p\bra{ 1 - \theta - h - \pcond \bra{ B \mid \mathcal F_{\tfail}} } \leq p\bra{1 - \theta - \frac{2h^{1/2}}{1-\theta-h}} \quad \text{a.s.\! when } \tfail < \abs{E}.
\]
In particular, 
\[
	\pcond \bra{ B \mid \mathcal F_{\tfail}} \geq \frac{h^{1/2}}{1-\theta-h} \quad \text{a.s.\! when } \tfail < \abs{E}.
\]
By the law of total expectation, we know that $\econd \sqbra{ \pcond \bra{ B \mid \mathcal F_{\tfail}} } = \pcond \bra{ B }$. So by Markov's inequality,
\begin{equation} \label[ineq]{ineq:tfail_markov}
	\pcond \bra{ \tfail < \abs{E} } \leq \pcond \bra{ \pcond \bra{ B \mid \mathcal F_{\tfail}} \geq \frac{h^{1/2}}{1-\theta-h} } \leq \frac{1-\theta - h}{h^{1/2}} \pcond \bra{B}.
\end{equation}
By definition of $\pcond$ and $h$, we have the trivial bound
\[
	\pcond \bra{B} \leq \frac{\p_p \bra{B}}{\p_p \bra{ \den{K_o} < \alpha }} = \frac{h}{1-\p_p \bra{ \den{K_o} \geq \alpha }}.
\]
If $\den{K_o} \geq \alpha$, then $o \in K_1$ or $\den{K_2} \geq \alpha$. So similarly to the argument for \cref{ineq:second_term}, a union bound gives $\pcond \bra{ B } \leq \frac{h}{1-\theta - h}$. Plugging this into \cref{ineq:tfail_markov} yields $\pcond \bra{ \tfail < \abs{E} } \leq h^{1/2}$, as required.
\end{proof}

\section{Characterising the existence of a percolation threshold}

In this section, we prove \cref{thm:main} and \cref{cor:lower_bound}. Our first step is to establish a mean-field lower bound on the supercritical giant cluster density for sequences of graphs without molecular subsequences. This is where we use the coupling lemma from \cref{sec:coupling_lemma}.

\begin{lem} \label{lem:mean-field}
	Let $(G_n)$ be a sequence of finite connected transitive graphs with volume tending to infinity that does not contain a molecular subsequence. Fix $\eps > 0$ and let $(p_n)$ be any sequence of parameters. If $\liminf_{n \to \infty} \e_{p_n}^{G_n} \den{K_1} > 0$, then
	\[
		\liminf_{n \to \infty} \e_{(1+\eps)p_n}^{G_n} \den{K_1} \geq \frac{\eps}{1+\eps}.
	\]
\end{lem}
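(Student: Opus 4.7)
My plan is to argue by contradiction. Suppose $\theta^* := \liminf_{n \to \infty} \e_{(1+\eps)p_n}^{G_n}\den{K_1} < \frac{\eps}{1+\eps}$ and pass to a subsequence on which $\theta_n := \e_{(1+\eps)p_n}^{G_n}\den{K_1} \to \theta^*$. I will apply \cref{lem:coupling} to each $G_n$ at parameter $(1+\eps)p_n$ with a carefully chosen density $\alpha_n \to 0$, producing an approximately monotone coupling of $\p_{q_n}^{G_n}$ with the conditioned measure $\p_{(1+\eps)p_n}^{G_n}(\boldsymbol{\cdot} \mid \den{K_o} < \alpha_n)$, where $q_n := (1-\theta_n-\delta_n)(1+\eps)p_n$. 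From this coupling I will extract an upper bound on $\e_{q_n}^{G_n}\den{K_1}$ that contradicts a matching lower bound obtained from monotonicity of percolation in $p$.

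The preparatory step is to choose $\alpha_n \to 0$ slowly enough that
\[
	h_n := \p_{(1+\eps)p_n}^{G_n}\bra{\den{K_1} < \alpha_n \text{ or } \den{K_2} \geq \alpha_n/2}
\]
tends to zero. The $\den{K_2}$ clause vanishes along any sufficiently slowly decaying $\alpha_n$ by \cref{thm:sup} (supercritical uniqueness), which applies because $(G_n)$ has no molecular subsequences. For the $\den{K_1}$ clause, the hypothesis $\liminf \e_{p_n}^{G_n}\den{K_1} > 0$ together with $\e_{p_n}\den{K_1} \leq \alpha + \p_{p_n}(\den{K_1} \geq \alpha)$ gives some $\alpha_0 > 0$ with $\liminf \p_{p_n}^{G_n}(\den{K_1} \geq \alpha_0) > 0$. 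A short argument then shows that $((1+\eps_0)p_n)$ is supercritical for some sufficiently small $\eps_0 \in (0,\eps)$, and the sharp-density property recalled in \cref{subsec:defining:molecular_seq} applied to this sequence yields $\p_{(1+\eps)p_n}^{G_n}(\den{K_1} \geq \alpha) \to 1$ for every fixed $\alpha \in (0,\alpha_0)$. A diagonalisation then produces a single $\alpha_n \to 0$ along which both clauses of $h_n$ vanish; since $\theta^* < 1$, also $\delta_n := 2h_n^{1/2}/(1-\theta_n-h_n) \to 0$.

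With the coupling in hand, I apply stochastic domination to the increasing event $\{\den{K_o} \geq \alpha_n\}$. Off $A_n$ the conditioned configuration satisfies $\den{K_o(\omega \cup \1_{A_n})} = \den{K_o(\omega)} < \alpha_n$, so the coupling yields $\p_{q_n}^{G_n}(\den{K_o} \geq \alpha_n) \leq h_n^{1/2}$. By transitivity this rewrites as $\e_{q_n}^{G_n}\sqbra{\sum_{K:\den{K}\geq\alpha_n}\den{K}} \leq h_n^{1/2}$, which in turn bounds $\e_{q_n}^{G_n}\den{K_1} \leq \alpha_n + h_n^{1/2} \to 0$. On the other hand, since $\theta^* < \frac{\eps}{1+\eps}$, one has $q_n/p_n \to (1-\theta^*)(1+\eps) > 1$, so eventually $q_n \geq (1+\eps'')p_n$ for some fixed $\eps'' > 0$; monotonicity of percolation in $p$ combined with the sharp-density argument above then forces $\e_{q_n}^{G_n}\den{K_1} \geq \alpha_0(1-o(1)) > 0$, contradicting the upper bound.

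The step I expect to be most delicate is the coordinated choice of $\alpha_n$: each clause of $h_n$ tends to zero only along a sufficiently slowly decaying $\alpha_n$, and these rates must be combined via a single diagonal sequence that also interacts cleanly with the definition of $q_n$. The remaining technicalities, such as handling $(1+\eps)p_n > 1$ or $\theta_n \to 1$ on a subsequence (both of which trivially give $\theta_n \geq \frac{\eps}{1+\eps}$) and verifying $q_n \in (0,1)$ so that \cref{lem:coupling} is applicable, should be straightforward to dispatch.
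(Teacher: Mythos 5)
Your proposal is correct and follows essentially the same route as the paper: argue by contradiction, apply \cref{lem:coupling} at parameter $(1+\eps)p_n$, use the sharp-density and supercritical uniqueness properties (available since there is no molecular subsequence) to force $h_n \to 0$, and derive a contradiction from the fact that the violated mean-field bound makes the reduced parameter $q_n$ exceed $p_n$. The only differences are cosmetic: the paper works with a single fixed density $\alpha$ (obtained from the hypothesis via Markov) and closes the argument by comparing $\p_{q_n}(\den{K_o}\geq\alpha)\leq h_n^{1/2}$ with $\p_{p_n}(\den{K_o}\geq\alpha)\geq\alpha^2$ to force $q_n\leq p_n$ directly, whereas you diagonalise over a decaying $\alpha_n$ and invoke sharp density a second time to lower-bound $\e_{q_n}\den{K_1}$ — extra steps that are valid but not needed.
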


\begin{proof}
	Suppose for contradiction that $\liminf_{n \to \infty} \e_{p_n}^{G_n} \den{K_1} > 0$ but $\liminf_{n \to \infty} \e_{(1+\eps)p_n}^{G_n} \den{K_1} < \frac{\eps}{1+\eps}$. By passing to a suitable subsequence, we may assume that
	\begin{equation} \label[ineq]{ineq:mean_field_violated}
		\limsup_{n \to \infty} \e_{(1+\eps)p_n}^{G_n} \den{K_1} < \frac{\eps}{1+\eps}.
	\end{equation}
	Pick $\alpha >0$ such that $\e_{p_n}^{G_n} \den{K_1} \geq 2\alpha$ for all sufficiently large $n$. By Markov's inequality applied to $1 - \den{K_1}$, this implies that $\p_{p_n}^{G_n} \bra{ \den{K_1} \geq \alpha } \geq \alpha$ for all sufficiently large $n$. Now for each $n$, define
	\[
		\theta_n := \e_{(1+\eps)p_n}^{G_n} \den{K_1} \qquad \text{and} \qquad h_n := \p_{(1+\eps)p_n}^{G_n} \bra{ \den{K_1} < \alpha \text{ or } \den{K_2} \geq \frac{\alpha}{2} }.
	\]
	Since $(G_n)$ does not contain a molecular subsequence, it has the sharp-density and supercritical uniqueness properties from \cite{https://doi.org/10.48550/arxiv.2112.12778}. (See \cref{subsec:defining:molecular_seq} for more information.) So the fact that $\liminf_{n \to \infty} \p_{p_n}^{G_n} \bra{ \den{K_1} \geq \alpha } >0$ implies that
	\[
		\lim_{n \to \infty} \p_{(1+\eps)p_n}^{G_n} \bra{ \den{K_1} < \alpha } = 0 \qquad \text{and} \qquad \lim_{n \to \infty} \p_{(1+\eps)p_n}^{G_n} \bra{ \den{K_2} \geq \frac{\alpha}{2} } = 0.
	\]
	So by a union bound, $\lim_{n \to \infty} h_n = 0$. Together with \cref{ineq:mean_field_violated}, this guarantees that for all sufficiently large $n$, we have $\theta_n + h_n < 1$, allowing us to apply \cref{lem:coupling}. By passing to a suitable tail of $(G_n)$, we may assume that this holds for every $n$. So for every $n$, \cref{lem:coupling} says that there is an event $A_n \subseteq \{0,1\}^{E(G_n)}$ with
	\begin{equation} \label[ineq]{ineq:error_event_bound}
		\p_{(1+\eps)p_n}^{G_n} \bra{ A_n \mid \den{K_o} < \alpha } \leq h_n^{1/2}
	\end{equation}
	such that
	\begin{equation} \label[ineq]{ineq:main_couple}
		\p_{ \bra{1 - \theta_n - \frac{2h_n^{1/2}}{1-\theta_n-h_n}}(1+\eps)p_n }^{G_n} \leq_{\mathrm{st}} \p_{(1+\eps)p_n}^{G_n} \bra{\omega \cup \1_{A_n} = \boldsymbol{\cdot} \mid \den{K_o} < \alpha}.
	\end{equation}

	When $\p_{p_n}^{G_n} \bra{ \den{K_1} \geq \alpha } \geq \alpha$, it follows by transitivity that $\p_{p_n}^{G_n} \bra{ \den{K_o} \geq \alpha } \geq \alpha^2$. On the other hand, by \cref{ineq:error_event_bound,ineq:main_couple}, we know that
	\[
		\p_{ \bra{1 - \theta_n - \frac{2h_n^{1/2}}{1-\theta_n-h_n}}(1+\eps)p_n }^{G_n} (\den{K_o} \geq \alpha) \leq h_n^{1/2}.
	\]
	So by taking $n$ sufficiently large that both $\p_{p_n}^{G_n} \bra{ \den{K_1} \geq \alpha } \geq \alpha$ and $h_n^{1/2} < \alpha^2$, we can force
	\begin{equation} \label[ineq]{ineq:parameter_comparison}
		\bra{1 - \theta_n - \frac{2h_n^{1/2}}{1-\theta_n-h_n}}(1+\eps)p_n \leq p_n.
	\end{equation}
	However, by \cref{ineq:mean_field_violated} and the fact that $\lim_{n \to \infty} h_n = 0$,
	\[
		1 - \theta_n - \frac{2h_n^{1/2}}{1-\theta_n-h_n} > \frac{1}{1+\eps}
	\]
	for all sufficiently large $n$, contradicting \cref{ineq:parameter_comparison}.
\end{proof}

By the sharp-density property, we can convert this mean-field lower bound that holds in \emph{expectation} into one that holds \emph{with high probability}.

\begin{lem} \label{lem:mean-field_whp}
	Let $(G_n)$ be a sequence of finite connected transitive graphs with volume tending to infinity that does not contain a molecular subsequence. Fix $\eps > 0$ and let $(p_n)$ be any sequence of parameters. If $\liminf_{n \to \infty} \e_{p_n}^{G_n} \den{K_1} > 0$, then
	\[
		\p_{(1+\eps)p_n}^{G_n} \bra{ \den{K_1} \geq \frac{\eps}{1+\eps} - o(1)  } = 1 - o(1) \quad \text{as $n \to \infty$.}
	\]
\end{lem}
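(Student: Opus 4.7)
The plan is to combine \cref{lem:mean-field} (which gives a bound in expectation) with the sharp-density property from \cite{https://doi.org/10.48550/arxiv.2112.12778} (recalled in \cref{subsec:defining:molecular_seq}) via a two-step argument. First I would use \cref{lem:mean-field} to pump up the percolation parameter slightly, producing a positive density cluster in expectation at some intermediate parameter; then a reverse-Markov bound converts this into positive probability of a cluster of the desired density; finally, the sharp-density property upgrades positive probability into probability tending to one.

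Concretely, it suffices to show that for every fixed $\delta > 0$ we have $\p_{(1+\eps)p_n}^{G_n}(\den{K_1} \geq \tfrac{\eps}{1+\eps} - \delta) \to 1$, since a standard diagonal argument then produces a sequence $\delta_n \to 0$ along which the conclusion holds. Given such a $\delta$, pick $\eps_1, \eps_2 > 0$ with $(1+\eps_1)(1+\eps_2) = 1+\eps$ and $\tfrac{\eps_1}{1+\eps_1} \geq \tfrac{\eps}{1+\eps} - \tfrac{\delta}{2}$ (which is possible by taking $\eps_1$ sufficiently close to $\eps$). Applying \cref{lem:mean-field} with $\eps_1$ in place of $\eps$ yields
\[
	\liminf_{n\to\infty} \e_{(1+\eps_1)p_n}^{G_n} \den{K_1} \geq \tfrac{\eps_1}{1+\eps_1} \geq \tfrac{\eps}{1+\eps} - \tfrac{\delta}{2}.
\]
Since $\den{K_1} \leq 1$, Markov's inequality applied to $1-\den{K_1}$ gives
\[
	\liminf_{n\to\infty}\p_{(1+\eps_1)p_n}^{G_n}\bra{\den{K_1} \geq \tfrac{\eps}{1+\eps} - \delta} > 0.
\]

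Next I would verify that $((1+\eps_1)p_n)$ is a supercritical sequence in the sense of \cref{subsec:defining:molecular_seq}: the hypothesis $\liminf \e_{p_n}^{G_n}\den{K_1} > 0$ and Markov applied to $1-\den{K_1}$ again yield some $\alpha_0 > 0$ with $\p_{p_n}^{G_n}(\den{K_1} \geq \alpha_0) \geq \alpha_0$ for large $n$; then choosing $\eta := \min(\alpha_0, \tfrac{\eps_1}{1+\eps_1})$, monotonicity gives $\p_{(1-\eta)(1+\eps_1)p_n}^{G_n}(\den{K_1} \geq \eta) \geq \p_{p_n}^{G_n}(\den{K_1} \geq \alpha_0) \geq \eta$. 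Because $(G_n)$ has no molecular subsequence, the sharp-density property applies, and together with the previous step it yields
\[
	\lim_{n\to\infty}\p_{(1+\eps_2)(1+\eps_1)p_n}^{G_n}\bra{\den{K_1} \geq \tfrac{\eps}{1+\eps} - \delta} = 1,
\]
which by the choice $(1+\eps_1)(1+\eps_2) = 1+\eps$ is precisely the desired conclusion. I do not expect a genuine obstacle here; the only point requiring care is the verification that the intermediate parameter sequence qualifies as supercritical so that the sharp-density property may legitimately be invoked.
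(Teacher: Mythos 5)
Your proposal is correct and follows essentially the same route as the paper: apply \cref{lem:mean-field} at a slightly reduced supercriticality parameter, convert the expectation bound to a positive-probability bound via Markov applied to $1-\den{K_1}$, boost to probability $1-o(1)$ at $(1+\eps)p_n$ using the sharp-density property, and finish with a diagonal argument over $\delta$. The only cosmetic differences are your multiplicative split $(1+\eps_1)(1+\eps_2)=1+\eps$ in place of the paper's additive $\eps-\delta$, and your explicit verification that the intermediate parameter sequence is supercritical, which the paper leaves implicit.
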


\begin{proof}
	Let $\delta \in (0,\eps)$ be any constant. By \cref{lem:mean-field},
	\[
		\liminf_{n \to \infty} \e_{(1+\eps-\delta)p_n}^{G_n} \den{K_1} \geq \frac{\eps-\delta}{1+\eps-\delta}.
	\]
	So by Markov's inequality applied to $1-\den{K_1}$,
	\begin{equation} \label[ineq]{ineq:mean-field_just_below}
		\liminf_{n \to \infty} \p_{(1+\eps-\delta)p_n}^{G_n} \bra{ \den{K_1} \geq \frac{\eps - \delta}{1+\eps}} \geq \frac{\delta(\eps-\delta)}{(1+\delta)(1+\eps-\delta)} > 0.
	\end{equation}
	Since $(G_n)$ does not contain a molecular subsequence, it has the sharp-density property from \cite{https://doi.org/10.48550/arxiv.2112.12778}. (Recall our discussion in \cref{subsec:defining:molecular_seq}.) So \cref{ineq:mean-field_just_below} implies that
	\[
		\lim_{n \to \infty} \p_{(1+\eps)p_n}^{G_n} \bra{ \den{K_1} \geq \frac{\eps - \delta}{1+\eps} } = 1.
	\]
	Since $\delta$ was arbitrary, the result now follows by a diagonal argument.
\end{proof}

Our next step is to extend a version of this lower bound to a simple kind of molecular sequence. The idea is to break the graphs in the sequence into their constituent \emph{atoms} then apply \cref{lem:mean-field_whp} to the sequence of atoms.

\begin{lem} \label{lem:mean-field_whp_for_molecules}
	Let $(G_n)$ be a sequence of finite connected transitive graphs with volume tending to infinity. Assume that $(G_n)$ is $m$-molecular for some $m \geq 2$ but does not contain a $k$-molecular subsequence for any $k > m$. Fix $\eps > 0$ and let $(p_n)$ be any sequence of parameters. If $\liminf_{n \to \infty} \e_{p_n}^{G_n} \den{K_1} > 0$, then
	\[
		\p_{(1+\eps)p_n}^{G_n} \bra{ \den{K_1} \geq \frac{\eps}{m(1+\eps)} - o(1)  } = 1 - o(1) \quad \text{as $n \to \infty$.}
	\]
\end{lem}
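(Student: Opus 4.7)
The plan is to reduce the statement to an application of \cref{lem:mean-field_whp} on the sequence of atoms of $(G_n)$. Let $F_n \subseteq E(G_n)$ denote the $\operatorname{Aut} G_n$-invariant edge set witnessing the $m$-molecularity of $(G_n)$, and for each $n$ let $H_n$ be one of the $m$ connected components (``atoms'') of $G_n \setminus F_n$. Because the stabiliser of $H_n$ in $\operatorname{Aut} G_n$ descends to a transitive subgroup of $\operatorname{Aut} H_n$, each $H_n$ is a finite connected transitive graph with $|V(H_n)| = |V(G_n)|/m \to \infty$; density of $(H_n)$ follows from density of $(G_n)$ together with $|F_n| = O(|V(G_n)|) = o(|V(G_n)|^2)$.

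The first step is to verify that $(H_n)$ itself contains no molecular subsequence. If $(H_{n_k})$ were $\ell$-molecular with witness $F'_{n_k} \subseteq E(H_{n_k})$, I would fix a reference isomorphism $\psi_1 : H_{n_k} \to A_1$ and, for each atom $A_i$ of $G_{n_k}$, choose some $\phi_i \in \operatorname{Aut} G_{n_k}$ with $\phi_i(A_1) = A_i$, setting $\psi_i := \phi_i \circ \psi_1$. A direct calculation using the $\operatorname{Aut} H_{n_k}$-invariance of $F'_{n_k}$ shows that $F''_{n_k} := F_{n_k} \cup \bigcup_{i=1}^{m} \psi_i(F'_{n_k})$ is well-defined (independent of the choices of $\phi_i$) and $\operatorname{Aut} G_{n_k}$-invariant. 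Since $|F''_{n_k}| = O(|V(G_{n_k})|)$ and $G_{n_k} \setminus F''_{n_k}$ has $m\ell \geq 2m > m$ components, $(G_{n_k})$ would be an $m\ell$-molecular subsequence of $(G_n)$, contradicting the hypothesis.

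The main obstacle is the next step: showing that $\liminf_{n \to \infty} \e_{p_n}^{H_n} \den{K_1} > 0$. I would argue by contradiction. Suppose $\theta_n^H := \e_{p_n}^{H_n} \den{K_1} \to 0$ along a subsequence. Applying \cref{thm:dense} to the dense sequence $(H_n)$, $\theta_n^H \to 0$ forces $p_n \lambda_n(H_n) \leq 1 + o(1)$ along this subsequence, and since each $H_n$ is regular with $\lambda_n(H_n) = d_n^H \geq c|V(H_n)|$ by density, this yields $p_n|V(G_n)| = m p_n|V(H_n)| = O(1)$. Decompose $\omega \sim \p_{p_n}^{G_n}$ as independent intra- and inter-atom percolation $(\omega^{\mathrm{atom}}, \omega^F)$ on $E(G_n) \setminus F_n$ and $F_n$ respectively, and let $M_n := |\omega^F|$. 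Every $G_n$-cluster is a connected super-cluster in the auxiliary graph whose nodes are the atom-clusters of $\omega^{\mathrm{atom}}$ and whose edges are the open $F_n$-edges, so a spanning-tree bound gives $|K_1(\omega)| \leq (M_n + 1) \max_i |K_1^{(i)}(\omega^{\mathrm{atom}})|$, where $K_1^{(i)}$ denotes the largest cluster in atom $i$. Taking expectation, using independence of $\omega^{\mathrm{atom}}$ and $\omega^F$, transitivity, and $\e M_n \leq Cp_n|V(G_n)| = O(1)$, one obtains $\e_{p_n}^{G_n} \den{K_1} \leq (1 + \e M_n)\theta_n^H \to 0$, contradicting the hypothesis $\liminf \e_{p_n}^{G_n} \den{K_1} > 0$.

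With $\liminf \e_{p_n}^{H_n} \den{K_1} > 0$ established, apply \cref{lem:mean-field_whp} to the non-molecular sequence $(H_n)$ to obtain
\[
\p_{(1+\eps)p_n}^{H_n} \bra{ \den{K_1} \geq \tfrac{\eps}{1+\eps} - o(1) } = 1 - o(1).
\]
Under $\p_{(1+\eps)p_n}^{G_n}$, the restriction of $\omega$ to $E(G_n) \setminus F_n$ is independent Bernoulli$((1+\eps)p_n)$ percolation on each of the $m$ isomorphic copies of $H_n$, so by independence, with probability $1 - o(1)$ at least one atom contains a cluster of size at least $\bra{\tfrac{\eps}{1+\eps} - o(1)} |V(H_n)| = \bra{\tfrac{\eps}{m(1+\eps)} - o(1)} |V(G_n)|$. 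Any such atom-cluster is contained in a cluster of $G_n$, giving $\den{K_1} \geq \tfrac{\eps}{m(1+\eps)} - o(1)$ with probability $1 - o(1)$, as required.
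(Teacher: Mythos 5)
Your proof is correct, and its overall skeleton coincides with the paper's: pass to the sequence of atoms, check that the atoms contain no molecular subsequence (your explicit construction of the invariant edge set $F''_{n_k}$, using well-definedness of $\psi_i(F'_{n_k})$ and the permutation of atoms by $\operatorname{Aut} G_{n_k}$, is exactly the observation the paper records in one line, giving an $m\ell$-molecular subsequence of $(G_n)$ with $m\ell > m$), then apply \cref{lem:mean-field_whp} to the atoms and divide densities by $m$. Where you genuinely diverge is in transferring supercriticality from $(G_n)$ to the atoms. The paper applies \cref{thm:dense} to \emph{both} $(G_n)$ and the atom sequence, uses $\abs{F_n} = O(\abs{V(G_n)})$ to get $\lambda(A_n)/\lambda(G_n) \to 1$, and deduces only that $((1+\delta)p_n)$ is supercritical for the atoms for each fixed $\delta > 0$; the resulting loss of $\delta$ is then removed by a diagonal argument at the end. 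You instead establish $\liminf_n \e_{p_n}^{H_n} \den{K_1} > 0$ at the parameter $p_n$ itself: if $\theta_n^H := \e_{p_n}^{H_n}\den{K_1} \to 0$ along a subsequence, then \cref{thm:dense} applied only to the dense atom sequence forces $p_n \lambda(H_n) \leq 1+o(1)$, hence $p_n = O(1/\abs{V(H_n)})$ by regularity and density, hence $\e M_n = p_n \abs{F_n} = O(1)$; combining this with your spanning-tree bound $\abs{K_1} \leq (M_n+1)\max_i \abs{K_1^{(i)}}$, the independence of the intra- and inter-atom edges, and $\e \max_i \abs{K_1^{(i)}} \leq m\,\e\abs{K_1^{(1)}} = \theta_n^H \abs{V(G_n)}$ (this sum bound over the $m$ isomorphic atoms is the precise content of what you call ``transitivity'') gives $\e_{p_n}^{G_n}\den{K_1} \leq (1+\e M_n)\theta_n^H \to 0$, contradicting the hypothesis. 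This buys a cleaner endgame -- you apply \cref{lem:mean-field_whp} directly at $(p_n)$ with no $\delta$-slack and no diagonal argument, and you need \cref{thm:dense} only for the atoms -- at the price of the explicit first-moment cluster-decomposition estimate; both routes are sound.
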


\begin{proof}
By definition of an $m$-molecular sequence, for every $n$, we can pick a set of edges $F_n \subseteq E(G_n)$ such that $F_n$ is $\operatorname{Aut} G_n$-invariant, $G_n \backslash F_n$ has $m$ connected components, and $\frac{\abs{F_n}}{\abs{V(G_n)}}$ is uniformly bounded. Let $A_n$ denote one of the connected components of $G_n \backslash F_n$, which are all necessarily isomorphic to each other and transitive. Notice that $(A_n)$ does not contain a molecular subsequence. Indeed, if $(A_n)$ contained an $r$-molecular subsequence $(A_n)_{n \in I}$ for some $r \geq 2$, then $(G_n)_{n \in I}$ would be an $r m$-molecular subsequence of $(G_n)$. (Every automorphism of $G_n$ acts on the $m$ copies of $A_n$ by permuting the copies and applying an automorphism of $A_n$ to each.)

For each finite graph $G$, let $\lambda(G)$ denote the largest eigenvalue of the adjacency matrix for $G$. Recall that when $G$ is regular, $\lambda(G) = \deg(G)$, the vertex degree of $G$. By \cref{thm:dense}, which is taken from \cite{MR2599196}, since $(G_n)$ and $(A_n)$ are sequences of dense graphs, they have percolation thresholds at $(1/\lambda(G_n))_{n \geq 1}$ and $(1/\lambda(A_n))_{n \geq 1}$ respectively. Since $\frac{\abs{F_n}}{\abs{V(G_n)}}$ is uniformly bounded, $\lim_{n \to \infty} \frac{\deg A_n}{\deg G_n} = 1$, and since every $A_n$ and $G_n$ is regular (since transitive), this means that $\lim_{n \to \infty} \frac{\lambda(A_n)}{\lambda(G_n)} = 1$. So for any $\delta >0$, since the sequence $((1+\delta)p_n)$ is supercritical for $(G_n)$, it is also supercritical for $(A_n)$. In particular,
\[
	\liminf_{n \to \infty} \e_{ (1+\delta) p_n }^{A_n} \den{K_1} > 0.
\]
Now since $(A_n)$ has no molecular subsequences, it follows by \cref{lem:mean-field_whp} that
\[
	\p_{(1+\eps)p_n}^{A_n} \bra{ \den{K_1} \geq \frac{\eps-\delta}{1+\eps} - o(1) } = 1 - o(1) \quad \text{as $n\to\infty$.}
\]
Since $\delta > 0$ was arbitrary, a diagonal argument gives
\[
	\p_{(1+\eps)p_n}^{A_n} \bra{ \den{K_1} \geq \frac{\eps}{1+\eps} - o(1) } = 1 - o(1) \quad \text{as $n\to\infty$.}
\]
The result follows because $A_n$ is a subgraph of $G_n$ and $\abs{V(G_n)} = m \abs{V(A_n)}$.
\end{proof}

We now extend this lower bound to sequences of graphs that have $m$-molecular subsequences for at most finitely many integers $m$.

\begin{lem} \label{lem:mean-field_whp_for_good_sequences}
	Let $(G_n)$ be a sequence of finite connected transitive graphs with volume tending to infinity that does not contain an $m$-molecular subsequence for any $m > M$, where $M$ is some positive integer. Fix $\eps > 0$ and let $(p_n)$ be any sequence of parameters. If $\liminf_{n \to \infty} \e_{p_n}^{G_n} \den{K_1} > 0$, then
	\[
		\p_{(1+\eps)p_n}^{G_n} \bra{ \den{K_1} \geq \frac{\eps}{M(1+\eps)} - o(1)  } = 1 - o(1) \quad \text{as $n \to \infty$.}
	\]
\end{lem}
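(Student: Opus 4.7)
The plan is to argue by contradiction and reduce to \cref{lem:mean-field_whp,lem:mean-field_whp_for_molecules} via a subsequence extraction that picks out a maximal molecular structure. First, by a diagonal argument, the stated ``$o(1)$'' conclusion is equivalent to the assertion that for every fixed $\eta > 0$,
\[
    \liminf_{n \to \infty} \p_{(1+\eps)p_n}^{G_n}\bra{ \den{K_1} \geq \frac{\eps}{M(1+\eps)} - \eta } = 1.
\]
So I would suppose for contradiction that this fails for some $\eta > 0$. Then there exist $\eta' > 0$ and a subsequence $(G_{n_k})$ along which the probability above is at most $1 - \eta'$. The hypothesis $\liminf_k \e_{p_{n_k}}^{G_{n_k}}\den{K_1} > 0$ is automatically inherited by this subsequence.

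Next I would consider the set $S \subseteq \{2, \dots, M\}$ of integers $m$ for which $(G_{n_k})$ contains an $m$-molecular sub-subsequence. If $S = \emptyset$, then $(G_{n_k})$ contains no molecular sub-subsequence at all, so \cref{lem:mean-field_whp} applies along $(G_{n_k})$ and yields the stronger bound $\den{K_1} \geq \frac{\eps}{1+\eps} - o(1)$ with probability $1 - o(1)$. Otherwise, set $m^* := \max S$ and pass to an $m^*$-molecular sub-subsequence. By the maximality of $m^*$, this sub-subsequence is $m^*$-molecular and contains no $r$-molecular sub-subsequence for any $r > m^*$ (any such would also be a molecular sub-subsequence of $(G_{n_k})$, contradicting the definition of $m^*$). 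So \cref{lem:mean-field_whp_for_molecules} applies with $m = m^*$ and yields $\den{K_1} \geq \frac{\eps}{m^*(1+\eps)} - o(1)$ with probability $1 - o(1)$ along the sub-subsequence.

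In either case, since $m^* \leq M$, the bound $\den{K_1} \geq \frac{\eps}{M(1+\eps)} - o(1)$ holds with probability $1 - o(1)$ along the extracted sub-subsequence, contradicting the defining property of $(G_{n_k})$. This completes the plan. There is no genuine analytic obstacle here; the only point that needs slight care is verifying that the maximality of $m^*$ is preserved under further subsequencing, which is immediate from the observation above. The real content of the proof lies entirely in \cref{lem:mean-field_whp,lem:mean-field_whp_for_molecules}, and this lemma is a purely combinatorial compactness wrapper on top of them.
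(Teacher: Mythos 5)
Your proposal is correct and follows essentially the same route as the paper: extract a subsequence, take the maximal $m \in \{2,\dots,M\}$ for which an $m$-molecular sub-subsequence exists (or note there is none), and apply \cref{lem:mean-field_whp} or \cref{lem:mean-field_whp_for_molecules} accordingly. The paper phrases this as ``every subsequence has a further subsequence along which the bound holds'' rather than as a proof by contradiction, but the two formulations are equivalent and the substance is identical.
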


\begin{proof}
It is enough to show that for every subsequence $(G_n)_{n \in I}$ of $(G_n)$, we can find a further subsequence $(G_n)_{n \in J}$ with $J \subseteq I$ such that
\[
		\p_{(1+\eps)p_n}^{G_n} \bra{ \den{K_1} \geq \frac{\eps}{M(1+\eps)} - o(1)  } = 1 - o(1) \quad \text{as $n \to \infty$ with $n \in J$.}
\]

Let $(G_n)_{n \in I}$ be a subsequence of $(G_n)$. If $(G_n)_{n \in I}$ does not contain a molecular subsequence, then by \cref{lem:mean-field_whp}, we know that $\p_{(1+\eps)p_n}^{G_n} \bra{ \den{K_1} \geq \frac{\eps}{1+\eps} - o(1)  } = 1 - o(1)$ as $n \to \infty$ with $n \in I$. On the other hand, if $(G_n)_{n \in I}$ does contain a molecular subsequence, then we can pick a subsequence $(G_n)_{n \in J}$ with $J \subseteq I$ that is $m$-molecular with $m \in \{2,\dots,M \}$ maximum. Then \cref{lem:mean-field_whp_for_molecules} tells us that $\p_{(1+\eps)p_n}^{G_n} \bra{ \den{K_1} \geq \frac{\eps}{m(1+\eps)} - o(1)  } = 1 - o(1)$ as $n \to \infty$ with $n \in J$.
\end{proof}

We are now ready to prove that if a sequence of graphs has $m$-molecular subsequences for at most finitely many integers $m$, then it has a percolation threshold. As outlined in \cref{subsec:proof_strategy}, the idea is to prove that the threshold for the emergence of a cluster of very slightly sublinear density is a percolation threshold.

\begin{lem} \label{lem:perco_threshold_for_good_sequences}
If a sequence of finite connected transitive graphs with volume tending to infinity contains an $m$-molecular subsequence for at most finitely many integers $m$, then it has a percolation threshold.
\end{lem}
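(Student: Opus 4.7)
The plan is to define the candidate percolation threshold as $p_n := \inf\{p : \p_p^{G_n}(\den{K_1} \geq \delta_n) \geq 1/2\}$ for some sequence $\delta_n \to 0$, to be chosen by diagonalisation; let $M$ denote the largest integer for which $(G_n)$ contains an $m$-molecular subsequence. Both halves of the percolation-threshold verification will rely on \cref{lem:mean-field_whp_for_good_sequences}, applied in two different ways.

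For the subcritical side, fix $\alpha, \eps > 0$ and suppose for contradiction that $\p_{(1-\eps)p_n}^{G_n}(\den{K_1} \geq \alpha) \not\to 0$. After passing to a subsequence on which this probability stays at least some $c > 0$ (and which of course inherits the molecular hypothesis of $(G_n)$), the trivial bound $\den{K_1} \geq \alpha \1_{\den{K_1} \geq \alpha}$ gives $\liminf_n \e_{(1-\eps)p_n}^{G_n}\den{K_1} \geq \alpha c > 0$. Pick $\eps' > 0$ small enough that $(1+\eps')(1-\eps) < 1$, and apply \cref{lem:mean-field_whp_for_good_sequences} to the sequence $((1-\eps)p_n)$ with parameter $\eps'$: this yields $\p_{(1+\eps')(1-\eps)p_n}^{G_n}(\den{K_1} \geq \beta) = 1 - o(1)$ for the constant $\beta := \eps'/(2M(1+\eps')) > 0$. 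Since $\delta_n$ eventually drops below $\beta$, we would get $\p_{(1+\eps')(1-\eps)p_n}^{G_n}(\den{K_1} \geq \delta_n) \to 1$. But $(1+\eps')(1-\eps)p_n < p_n$, so by the infimum definition of $p_n$ this probability must be at most $1/2$, a contradiction.

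For the supercritical side, fix $\eps > 0$. For each constant $\delta > 0$, set $p_n(\delta) := \inf\{p : \p_p^{G_n}(\den{K_1} \geq \delta) \geq 1/2\}$. The trivial bound $\den{K_1} \geq \delta \1_{\den{K_1} \geq \delta}$ gives $\e_{p_n(\delta)}^{G_n} \den{K_1} \geq \delta/2 > 0$, so \cref{lem:mean-field_whp_for_good_sequences} applied to $(p_n(\delta))$ yields $\p_{(1+\eps)p_n(\delta)}^{G_n}(\den{K_1} \geq \eps/(2M(1+\eps))) = 1 - o(1)$ as $n \to \infty$, where the $o(1)$ depends on both $\eps$ and $\delta$. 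A standard diagonal argument then produces a single sequence $\delta_n \to 0$ such that, for every rational $\eps > 0$, $\p_{(1+\eps)p_n(\delta_n)}^{G_n}(\den{K_1} \geq \eps/(2M(1+\eps))) = 1 - o(1)$. For an arbitrary $\eps > 0$, picking a rational $\eps_k \in (0, \eps)$ from the enumeration and using monotonicity of percolation in the parameter gives $\p_{(1+\eps)p_n}^{G_n}(\den{K_1} \geq \eps_k/(2M(1+\eps_k))) = 1 - o(1)$, establishing the supercritical half.

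The main obstacle will be managing the diagonal argument cleanly: the $o(1)$ error in \cref{lem:mean-field_whp_for_good_sequences} depends on both $\eps$ and on the underlying sequence $(p_n(\delta))$, so to make this precise one extracts, for each triple $(\eps, \delta, \eta)$ with $\eta > 0$, an integer $N(\eps, \delta, \eta)$ beyond which the mean-field bound holds with error at most $\eta$, and then selects $\delta_n$ to decay slowly enough in $n$ that $N(\eps_k, \delta_n, 1/l) \leq n$ holds for all pairs of indices $(k, l)$ up to some cutoff that grows slowly with $n$.
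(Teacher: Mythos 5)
Your proposal is correct and follows essentially the same route as the paper: the same candidate threshold defined by the $1/2$-probability level for a slowly vanishing density $\delta_n$, Lemma \ref{lem:mean-field_whp_for_good_sequences} applied in both directions (by contradiction for the subcritical side, via a diagonal argument over a countable set of $\eps$'s and constant densities $\delta$ for the supercritical side). The only differences are cosmetic — an infimum rather than the unique $1/2$-level parameter, and a single diagonalisation over triples instead of the paper's two-stage diagonal argument.
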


\begin{proof}
Let $(G_n)$ be such a sequence of graphs. Let $M$ be a positive integer such that there are no $m$-molecular subsequences with $m > M$. For every density $\delta \in (0,1)$ and each index $n$, define $p_c^n(\delta)$ to be the unique parameter satisfying $\p_{p_c^n(\delta)}^{G_n} \bra{ \den{K_1} \geq \delta } = \frac{1}{2}$. Given any constants $\eps,\delta \in (0,1)$, we know by \cref{lem:mean-field_whp_for_good_sequences} that
\[
	\lim_{n \to \infty} \p_{(1+\eps) p_c^n(\delta)}^{G_n} \bra{ \den{K_1} \geq \frac{\eps}{2M} } = 1,
\]
since $\frac{\eps}{2M} < \frac{\eps}{M(1+\eps)}$. So by a diagonal argument, for every $\eps \in (0,1)$, there exists a sequence $(\delta_n^\eps)\seq$ in $(0,1)$ with $\lim_{n \to \infty} \delta_n^\eps =0$ such that
\[
	\lim_{n \to \infty} \p_{(1+\eps) p_c^n(\delta_n^\eps)}^{G_n} \bra{ \den{K_1} \geq \frac{\eps}{2M} } = 1.
\]
Now by a further diagonal argument, there exists a fixed sequence $(\delta_n)$ in $(0,1)$ with $\lim_{n \to \infty} \delta_n = 0$ such that for every $\eps \in \cubra{ \frac{1}{2}, \frac{1}{3}, \frac{1}{4},\dots }$,
\[
	\lim_{n \to \infty} \p_{(1+\eps) p_c^n(\delta_n)}^{G_n} \bra{ \den{K_1} \geq \frac{\eps}{2M} } = 1.
\]
We claim that the sequence of parameters $(p_n)$ given by $p_n := p_c^n(\delta_n)$ has the required properties.

To verify the subcritical condition, suppose for contradiction that ${\lim_{n \to \infty} \p_{(1-\eps)p_n}^{G_n} \bra{ \den{K_1} \geq \alpha } \not=0}$ for some $\eps,\alpha \in (0,1)$. By passing to a suitable subsequence, we may assume that
\[
	\liminf_{n \to \infty} \p_{(1-\eps)p_n}^{G_n} \bra{ \den{K_1} \geq \alpha } > 0.
\]
Then by \cref{lem:mean-field_whp_for_good_sequences},
\[
	\lim_{n \to \infty} \p_{p_n}^{G_n} \bra{ \den{K_1} \geq \frac{\eps}{2M} }=1.
\]
This contradicts the fact that for every $n$ that is sufficiently large to ensure $\delta_n \leq \frac{\eps}{2M}$,
\[
	\p_{p_n}^{G_n} \bra{ \den{K_1} \geq \frac{\eps}{2M} } \leq \p_{p_n}^{G_n} \bra{ \den{K_1} \geq \delta_n } = \frac{1}{2}.
\]

To verify the supercritical condition, fix $\eps > 0$. Pick any $\eps' \in \cubra{ \frac{1}{2}, \frac{1}{3}, \frac{1}{4},\dots }$ with $\eps' < \eps$. Then by construction of $(\delta_n)$,
\[
	\p_{(1+\eps)p_n}^{G_n} \bra{ \den{K_1} \geq \frac{\eps'}{2M} } \geq \p_{(1+\eps')p_n}^{G_n} \bra{ \den{K_1} \geq \frac{\eps'}{2M} } = 1 - o(1) \quad \text{as $n \to \infty$.} \qedhere
\]
\end{proof}

We now verify that these sequences of graphs - those that have $m$-molecular subsequences for at most finitely many integers $m$ - are the \emph{only} sequences to have a percolation threshold. This follows from the following corollary of \cref{thm:sup} from \cite{https://doi.org/10.48550/arxiv.2112.12778} that characterises the \emph{supercritical existence property}.

\begin{defn}
	We say that $(G_n)$ has the \emph{supercritical existence property} if for every supercritical sequence of parameters $(q_n)$,
	\[
		\lim_{n \to \infty} \p_{q_n}^{G_n} \bra{ \den{K_1} \geq \alpha } = 1 \qquad \text{for some $\alpha > 0$}.
	\]
\end{defn}

\begin{cor}[Easo and Hutchcroft, 2021] \label{cor:sep}
A sequence of finite connected transitive graphs with volume tending to infinity has the supercritical existence property if and only if it contains an $m$-molecular
subsequence for at most finitely many integers $m$.
\end{cor}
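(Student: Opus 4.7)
The plan is to prove the equivalence in two directions. The $(\Leftarrow)$ implication follows straightforwardly from \cref{lem:mean-field_whp_for_good_sequences}: assume $(G_n)$ contains no $m$-molecular subsequence for $m > M$, and let $(q_n)$ be any supercritical sequence. The definition of supercriticality together with Markov's inequality applied to $1 - \den{K_1}$ yields some $\eps > 0$ with $\liminf_{n \to \infty} \e^{G_n}_{(1-\eps)q_n} \den{K_1} \geq \eps^2 > 0$. Applying \cref{lem:mean-field_whp_for_good_sequences} to $p_n := (1-\eps)q_n$ with a small $\eps' \in (0,\eps/(1-\eps))$ chosen so that $(1+\eps')(1-\eps) \leq 1$ gives $\p^{G_n}_{(1+\eps')p_n}\bra{\den{K_1} \geq \frac{\eps'}{M(1+\eps')} - o(1)} = 1 - o(1)$, and monotonicity in the percolation parameter then upgrades this to $\p^{G_n}_{q_n}(\den{K_1} \geq \alpha) \to 1$ for a suitable constant $\alpha > 0$, establishing the supercritical existence property.

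For $(\Rightarrow)$, I argue the contrapositive: if $(G_n)$ contains $m$-molecular subsequences for infinitely many integers $m$, I build a supercritical $(q_n)$ along which no $\alpha > 0$ satisfies $\p^{G_n}_{q_n}(\den{K_1} \geq \alpha) \to 1$. By a diagonal extraction, pick $m_k \to \infty$ and indices $n_k$ so that each $G_{n_k}$ admits a molecular partition into $m_k$ isomorphic transitive atoms $A_{n_k}$ via an $\operatorname{Aut} G_{n_k}$-invariant edge set $F_{n_k}$ with $\abs{F_{n_k}} \leq C(m_k)\abs{V(G_{n_k})}$. The plan is to tune $q_{n_k}$ to a scale that is supercritical with respect to the atoms (so that percolation at $(1-\eps)q_{n_k}$ already produces a cluster of positive density inside any single atom $A_{n_k}$, which after accounting for a bounded number of atoms merging via boundary edges yields a cluster of density $\Omega(1)$ in $G_{n_k}$) yet subcritical with respect to the atom-quotient connectivity (so that with positive probability the atom giants stay in $O(1)$ distinct clusters of $G_{n_k}$, forcing $\den{K_1} \leq O(1/m_k) \to 0$). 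Setting $q_n := 1$ for $n \notin \{n_k\}$ makes the remainder of the sequence trivially supercritical, completing the construction.

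The main obstacle is this joint tuning: supercriticality for the whole graph requires $q_{n_k}$ above some threshold, while keeping the induced atom-quotient percolation subcritical requires it below another. Separating these two thresholds by a constant factor is the combinatorial heart of the converse direction, and relies delicately on the boundary-to-volume ratio $\abs{F_{n_k}}/\abs{V(G_{n_k})}$ together with the symmetric structure of the atom-quotient graph under $\operatorname{Aut} G_{n_k}$. A cleaner execution, presumably taken in \cite{https://doi.org/10.48550/arxiv.2112.12778}, is to invoke \cref{thm:sup} directly on each $m_k$-molecular subsequence to obtain supercritical parameters witnessing multiple giant clusters of density $\sim 1/m_k$, and then diagonalize these witnesses into a single supercritical $(q_n)$ for $(G_n)$ along which the largest cluster has density tending to zero.
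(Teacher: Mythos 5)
You are attempting to reprove a statement that this paper does not prove at all: \cref{cor:sep} is imported verbatim from \cite{https://doi.org/10.48550/arxiv.2112.12778} as a corollary of \cref{thm:sup} established there, and the present paper only ever uses its `only if' direction (to deduce \cref{lem:supermolecular_no_perco_threshold}). Your $(\Leftarrow)$ direction is essentially correct and is a nice observation: granted \cref{lem:mean-field_whp_for_good_sequences} (whose proof rests on the sharp-density and uniqueness properties and on \cref{thm:dense}, not on \cref{cor:sep}, so there is no circularity within this paper), a supercritical $(q_n)$ gives $\e_{(1-\eps)q_n}\den{K_1} \geq \eps^2$ directly from $\p_{(1-\eps)q_n}(\den{K_1}\geq\eps)\geq\eps$ --- note Markov applied to $1-\den{K_1}$ goes the \emph{other} way and is not what you want here --- and then choosing $\eps'$ with $(1+\eps')(1-\eps)\leq 1$ plus monotonicity yields a single constant $\alpha>0$ with $\p_{q_n}(\den{K_1}\geq\alpha)\to 1$; the indices with $q_n\geq 1$ are trivial by connectivity, and the definition of supercriticality exempts them anyway.

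The genuine gap is the $(\Rightarrow)$ direction, which is precisely the half the paper needs. Your construction takes $q_{n_k}$ ``supercritical with respect to the atoms'' and asserts that a bounded number of merged atom-giants gives a cluster of density $\Omega(1)$ in $G_{n_k}$; this is false when $m_k\to\infty$, since each atom has density $1/m_k$, so a bounded number of merged atoms has density $O(1/m_k)=o(1)$. Consequently no \emph{fixed} $\eps$ can witness $\p_{(1-\eps)q_{n_k}}(\den{K_1}\geq\eps)\geq\eps$, and your $(q_n)$ is simply not supercritical in the sense of this paper, so it cannot refute the supercritical existence property. If instead you push $q_{n_k}$ up until a positive \emph{proportion} of the $m_k$ atoms merge (as supercriticality with a fixed $\eps$ forces), then by monotonicity the merge structure at $q_{n_k}$ dominates that at $(1-\eps)q_{n_k}$, and in natural examples (e.g.\ cycle-like quotients such as $K_n \square C_{m_k}$) the merging transition has multiplicative width $o(1)$ in the parameter as $m_k\to\infty$, so at $q_{n_k}$ the atoms are typically fully merged and $\den{K_1}$ is macroscopic with high probability --- the opposite of what you need. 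Your fallback via \cref{thm:sup} does not repair this: non-uniqueness along each $m_k$-molecular subsequence only produces parameter sequences that are supercritical \emph{for that subsequence}, with constants that may (and in the natural examples do) degenerate like $1/m_k$, and it controls $\den{K_2}$, not the smallness of $\den{K_1}$; diagonalising such witnesses need not produce a single sequence that is supercritical for $(G_n)$ with one fixed $\eps$, which is exactly the uniformity the definition demands. Closing this requires the actual construction of \cite{https://doi.org/10.48550/arxiv.2112.12778}, and your sketch does not recover it.
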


Notice that if a sequence of finite connected transitive graphs with volume tending to infinity has a percolation threshold, then it automatically has the supercritical existence property. So the `only if' direction of \cref{cor:sep} immediately implies the following lemma.

\begin{lem} \label{lem:supermolecular_no_perco_threshold}
	If a sequence of finite connected transitive graphs with volume tending to infinity contains an $m$-molecular subsequence for infinitely many integers $m$, then it does not have a percolation threshold.
\end{lem}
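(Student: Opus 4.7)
The plan is to verify the observation stated in the paragraph preceding the lemma and then invoke the `only if' direction of \cref{cor:sep}. Specifically, I will show that any sequence $(G_n)$ of finite connected transitive graphs with volume tending to infinity that has a percolation threshold automatically has the supercritical existence property. Combining this with the `only if' direction of \cref{cor:sep} and contraposing then immediately yields the lemma.

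To carry out the verification, suppose $(G_n)$ has a percolation threshold $(p_n)$ and let $(q_n)$ be any supercritical sequence of parameters. The goal is to produce some $\alpha > 0$ with $\lim_{n \to \infty} \p_{q_n}^{G_n} \bra{ \den{K_1} \geq \alpha } = 1$. The indices $n$ with $q_n \geq 1$ are harmless under the convention $\p_{q_n}^{G_n} := \p_1^{G_n}$ (on such configurations $\den{K_1} = 1$ almost surely), so I focus on those with $q_n < 1$. The first step is to compare $(q_n)$ against the threshold $(p_n)$: combining the supercritical hypothesis on $(q_n)$ with the subcritical half of the threshold condition via monotonicity of percolation in $p$ forces $q_n \geq (1+\delta) p_n$ for some fixed $\delta > 0$ and all sufficiently large $n$ with $q_n < 1$. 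Applying the supercritical half of the threshold condition with the smaller constant $\delta/2$ then yields some $\alpha > 0$ with $\lim_{n \to \infty} \p_{(1+\delta/2) p_n}^{G_n} \bra{ \den{K_1} \geq \alpha } = 1$, and a second monotonicity step transfers this bound to $(q_n)$, as required.

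No step here is substantial: the entire argument is a direct unpacking of definitions followed by an appeal to \cref{cor:sep}. Perhaps the only point requiring a brief check is the comparison of $(q_n)$ with $(p_n)$ in the first step, which reconciles the general `without-threshold' definition of supercriticality from \cref{subsec:defining:molecular_seq} with the `with-threshold' form that matches the threshold condition verbatim; this reconciliation is nothing more than a short monotonicity argument using the subcritical half of the threshold alongside the supercritical hypothesis.
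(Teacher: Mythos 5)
Your proposal is correct and follows exactly the paper's route: observe that having a percolation threshold implies the supercritical existence property, then contrapose the `only if' direction of \cref{cor:sep}. The paper states the first observation without proof, while you supply the short monotonicity/subsequence verification, which is accurate; the two arguments are essentially identical.
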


Our main result, \cref{thm:main}, now follows by combining \cref{lem:perco_threshold_for_good_sequences,lem:supermolecular_no_perco_threshold}. We conclude this section by deducing \cref{cor:lower_bound} from \cref{lem:mean-field_whp_for_good_sequences}.

\begin{proof}[Proof of \cref{cor:lower_bound}]
For every $\delta \in (0,\eps)$, we know by definition of a percolation threshold that $\liminf_{n \to \infty} \e_{(1+\delta)p_n}^{G_n} \den{K_1} > 0$. So by \cref{lem:mean-field_whp_for_good_sequences},
\[
	\p_{(1+\eps)p_n}^{G_n} \bra{ \den{K_1} \geq \frac{\eps - \delta}{M(1+\eps)} - o(1) } = 1 - o(1) \quad \text{as $n \to \infty$.}
\]
Since $\delta$ was arbitrary, the result follows by a diagonal argument. 
\end{proof}

\section{Bounding the threshold location}
In this section, we give a proof of \cref{prop:sharp_bounds}. This is simply the observation that existing arguments immediately imply bounds on the location of a percolation threshold (when it exists), which happen to be best-possible. The lower bound is a completely standard path-counting argument that we only include for completeness. The upper bound comes from bounding the percolation threshold by the connectivity threshold. As explained in \cite{MR4262440}, we can estimate the connectivity threshold thanks to an upper bound by Karger and Stein on the number of \emph{approximate} minimum cutsets in a graph \cite{MR1409212}.

The lower bound is sharp in the classical case of the complete graphs \cite{MR125031}. In fact, it is sharp for a very wide range of examples, including arbitrary dense regular graphs \cite{MR2599196}, the hypercubes \cite{MR671140} (which are sparse yet have unbounded vertex degrees), and expanders with high girth and bounded vertex degrees \cite{MR2073175}. However, it is less obvious that the upper bound is optimal in the sense that the constant '2' cannot be improved. For this, see the \emph{fat cycles} construction in \cite{MR4262440}.

\begin{proof}[Proof of \cref{prop:sharp_bounds}]
We start with the lower bound. This argument is just an optimisation of the proof of Lemma 2.8 in \cite{https://doi.org/10.48550/arxiv.2112.12778}. Fix $\eps \in (0,1)$. It suffices to check that $\bra{\frac{1-\eps}{d_n-1}}\seq$ is not supercritical along any subsequence. There are at most $d_n (d_n-1)^{r-1}$ simple paths of length $r$ starting at a particular vertex $o$ in each $G_n$. So for every $n$,
\[
	\e_{ \frac{1-\eps}{d_n-1} }^{G_n} \abs{K_o} \leq \sum_{r \geq 0} d_n(d_n-1)^{r-1} \bra{\frac{1-\eps}{d_n-1}}^{r} \leq \frac{2}{\eps}.
\]
In particular, $\lim_{n \to \infty} \e_{ \frac{1-\eps}{d_n-1} }^{G_n} \den{K_o} = 0$, and hence by transitivity, $\lim_{n \to \infty} \e_{ \frac{1-\eps}{d_n-1} } \den{K_1} = 0$.

We now prove the upper bound. Every finite connected transitive graph has edge connectivity equal to its vertex degree \cite{MR291004}. So by Theorem 4.1 from \cite{MR4262440},
\[
	\lim_{n \to \infty } \p_{(2+o(1))\frac{\log \abs{V(G_n)}}{d_n} }^{G_n} \bra{\omega \text{ is connected}} = 1.
\]
In particular, $\bra{\frac{2\log \abs{V(G_n)}}{d_n}}\seq$ is not subcritical along any subsequence.
\end{proof}

\section{Acknowledgements}
I thank Tom Hutchcroft for helpful comments on earlier drafts and for encouraging me to pursue this question of mine. I also thank an anonymous referee for their useful feedback and suggestions, which improved the clarity of the paper.

\printbibliography

\end{document}